\documentclass[11pt,reqno]{amsart}

\usepackage{amsmath,amssymb,amsthm,xcolor,tikzsymbols}
\usepackage[margin=1.5in]{geometry}
\usepackage{setspace}\onehalfspacing
\usepackage[colorlinks=True,citecolor=blue,linkcolor=red]{hyperref}

\usepackage[vcentermath]{genyoungtabtikz}
\Ylinethick{.9pt}
\Yboxdim{14pt}
\usepackage{ytableau}

\newcommand{\absval}[1]{\left\lvert #1 \right\rvert}
\newcommand{\bon}{\overline{1}}
\newcommand{\B}{\mathcal{B}}
\newcommand{\C}{\mathcal{C}}
\newcommand{\ch}{\operatorname{ch}}
\newcommand{\g}{\mathfrak{g}}
\newcommand{\gl}{\mathfrak{gl}}

\newcommand{\iso}{\cong}

\newcommand{\longhookarrow}{\lhook\joinrel\longrightarrow}
\renewcommand{\mid}{:}

\newcommand{\q}{\mathfrak{q}}

\newcommand{\R}{\mathcal{R}}

\newcommand{\rd}{\mathrm{read}}  
\newcommand{\red}{\overline{\rd}}  
\newcommand{\SDT}{\mathrm{SDT}}

\newcommand{\T}{\mathcal{T}}
\newcommand{\wt}{\mathrm{wt}}
\newcommand{\zero}{\boldsymbol{0}}
\newcommand{\ZZ}{\mathbf{Z}}

\newcommand{\fb}{\Yfillcolour{blue!10}}  
\newcommand{\fe}{\Yfillcolour{white}}  
\newcommand{\ml}[1]{\color{darkred} \mathbf{#1}}  

\definecolor{darkred}{rgb}{0.7,0,0} 
\newcommand{\defn}[1]{{\color{darkred}\emph{#1}}} 

\theoremstyle{plain}
\newtheorem{thm}{Theorem}[section]
\newtheorem{lemma}[thm]{Lemma}

\newtheorem{prop}[thm]{Proposition}
\newtheorem{cor}[thm]{Corollary}
\theoremstyle{definition}
\newtheorem{dfn}[thm]{Definition}
\newtheorem{ex}[thm]{Example}
\newtheorem{remark}[thm]{Remark}
\numberwithin{equation}{section}
\numberwithin{figure}{section}
\numberwithin{table}{section}

\usepackage{listings}
\lstset{backgroundcolor=\color{blue!5}}
\lstdefinelanguage{Sage}[]{Python}
{morekeywords={False,True},sensitive=true}
\lstset{
  emph={sage},
  emphstyle=\color{blue},
  emph={[2]self},
  emphstyle={[2]\color{brown}},
  frame=none,
  showtabs=False,
  showspaces=False,
  showstringspaces=False,
  commentstyle={\ttfamily\color{olive}},
  keywordstyle={\ttfamily\color{purple}\bfseries},
  stringstyle={\ttfamily\color{orange}\bfseries},
  language=Sage,
  basicstyle={\footnotesize\ttfamily\singlespacing},
  aboveskip=0.0em,
  belowskip=.1in,
  xleftmargin=.1in,
  xrightmargin=.1in,
}

\usepackage[colorinlistoftodos]{todonotes}

\AtBeginDocument{%
   \def\MR#1{}
}

\begin{document}

\title[Candidate for the crystal $B(-\infty)$ for the queer Lie superalgebra]{Candidate for the crystal $B(-\infty)$\\ for the queer Lie superalgebra}

\author[B.~Salisbury]{Ben Salisbury}
\address[B.~Salisbury]{Department of Mathematics, Central Michigan University, Mount Pleasant, MI 48859, USA}
\email{ben.salisbury@cmich.edu}
\urladdr{http://people.cst.cmich.edu/salis1bt/}
\thanks{B.S.\ was partially supported by Simons Foundation grant 429950.}

\author[T.~Scrimshaw]{Travis Scrimshaw}
\address[T.~Scrimshaw]{School of Mathematics and Physics, The University of Queensland, St. Lucia, QLD 4072, Australia}
\email{tcscrims@gmail.com}
\urladdr{https://people.smp.uq.edu.au/TravisScrimshaw/}
\thanks{T.S.\ was partially supported by Australian Research Council DP170102648.}
\keywords{crystal, decomposition tableau, queer Lie superalgebra}
\subjclass[2010]{05E10,17B37}

\begin{abstract}
It is shown that the direct limit of the semistandard decomposition tableau model for polynomial representations of the queer Lie superalgebra exists, which is believed to be the crystal for the upper half of the corresponding quantum group. An extension of this model to describe the direct limit combinatorially is given. Furthermore, it is shown that the polynomial representations may be recovered from the limit in most cases.
\end{abstract}

\maketitle

\section{Introduction}

In the 1990s, Kashiwara began the study of crystals, a combinatorial skeleton of a quantum group representation $U_q(\g)$, where $\g$ is a symmetrizable Kac--Moody algebra.  (See also Lusztig's canonical basis \cite{Lusztig90} and Littelmann's path model \cite{L94,L95-2}.)  Kashiwara showed~\cite{K90,K91} that irreducible highest weight representations have crystal bases $B(\lambda)$ and that the lower half of the quantum group has a crystal basis $B(\infty)$. The global crystal basis (general $q$ version of a crystal basis) is known~\cite{GL93} to equal the canonical basis introduced by Lusztig~\cite{Lusztig90}.
Kashiwara also proved that the direct limit of $B(\lambda)$ is isomorphic to $B(\infty)$ and one can recover $B(\lambda)$ by cutting a part of $B(\infty)$ by taking the tensor product with a specific crystal $\R_\lambda$.  Using the direct limit, numerous combinatorial models for $B(\infty)$ have been developed such as (marginally) large tableaux~\cite{Cliff98,HL08} and rigged configurations~\cite{SalisburyS15,SalisburyS15II,SalisburyS16II}.

For Lie superalgebras, there are two natural analogs of $\gl(n)$.  The first is the general linear Lie superalgebra $\gl(m|n)$, where crystal bases have been constructed for the polynomial representations~\cite{BKK00}, Kac modules~\cite{Kwon14}, and $B(\infty)$ for $\gl(m|1)$~\cite{Clark16}.  Furthermore, the character theory for $\gl(m|n)$ has been well-studied~\cite{Brundan03,CHR15,Serganova93,SZ07,VHKT90} with connections to quasisymmetric functions~\cite{Kwon09}. The other is the queer superalgebra $\q(n)$.  The tensor powers of the fundamental representation form a semisimple category~\cite{GJKK10}, the irreducible representations are called the polynomial representations, and crystal bases of these irreducible representations have been constructed using semistandard decomposition tableaux~\cite{GJKKK14,GJKKK15}. The character theory of $\q(n)$ has also been studied~\cite{Brundan04,CKW17,SZ15}. In particular, the characters of the polynomial representations are Schur $P$-functions, which (along with the closely related Schur $Q$-functions) are of broad interest with other crystal connections~\cite{BH95,HMP17,HPS17,Jozefiak91,Schur11}. Recently, a local characterization of the crystals for polynomial representations of $\q(n)$ was given~\cite{AKO18,GHPS18} in analogy to the Stembridge axioms~\cite{Stem.2003}.

The goal of this paper is to construct the direct limit of the crystals of polynomial representations. We call this limit $B(-\infty)$ as we are considering the polynomial representations as \emph{lowest} weight representations and taking the corresponding limit. We believe this to be the crystal basis of the contragredient dual of the $q$-Weyl module $W(0)$ defined in~\cite[Section~4]{GJKK10},\footnote{The module as $q \to 1$ was called a Verma module in~\cite[Section~2.1.6]{CW12}.} in parallel to the case for $\gl(n)$ where $B(-\infty)$ is the crystal basis of the contragredient dual of the Verma module $M(0)$.
Subsequently, this should be a combinatorial model for the crystal basis of upper half of the corresponding quantum group $U_q\bigl( \q(n) \bigr)$ and also for the contragredient dual of the $q$-Weyl module $W(\lambda)$ after shifting the weight by $\lambda$.  This claim is partially supported by a character calculation and an explicit combinatorial characterization of the lowest weight elements in $B(-\infty)$. (See Proposition \ref{prop:char} and Proposition~\ref{prop:alg}.)

Our method follows the construction of (marginally) large tableaux for semistandard decomposition tableaux $\SDT(\lambda)$ by showing the one can construct a directed system and the $\q(n)$-crystal operators respect enlarging the shape.  (See Lemma \ref{lem:up_inclusions} and Corollary \ref{cor:up_system}.)  We then identify elements in each $\SDT(\lambda)$ based on their distance from the lowest weight element and take a distinguished representative.  Thus, our model is this limit crystal $\SDT(-\infty)$ of distinguished representatives that we call dual marginally large semistandard decomposition tableaux.  (See Theorem \ref{thm:up-inf}.)  Our other main result (see Theorem \ref{thm:cutting_it_out}) is describing how we can recover $\SDT(\lambda)$ from $\SDT(-\infty)$ using a dual version of $\R_{\lambda}$ in the case when $\lambda$ corresponds to a strict partition that has maximal length. We expect this to generalize to the case when $\lambda$ does not have maximal length by a modification of the tensor product rule, which we also expect to construct crystals for dual polynomial representations.

It is noteworthy that one cannot take the limit of the polynomial representations considered as highest weight representations. Indeed, if we consider the shapes $\lambda = (5,3,1)$ and $\mu = (6,4,1)$, then for the direct limit, we must have an inclusion $B(\lambda) \lhook\joinrel\longrightarrow B(\mu)$. However, if $T^{\lambda} \in B(\lambda)$ and $T^{\mu} \in B(\mu)$ are the unique highest weight elements of weight $\lambda$ and $\mu$, respectively, then
\[
f_{\bon} f_2 f_{\bon} f_2 f_2 T^{\lambda} \neq \zero,
\hspace{70pt}
f_{\bon} f_2 f_{\bon} f_2 f_2 T^{\mu} = \zero.
\]
Roughly speaking, the reason this inclusion fails is because adding entries to go from $T^{\lambda}$ to $T^{\mu}$ does not behave well with respect to the crystal operators $e_{\bon}$ and $f_{\bon}$. Indeed, we have
\begin{align*}
T^{\lambda} & = \gyoung(32211,:;211,::;1)\,,
&
f_2 f_{\bon} f_2 f_2 T^{\lambda} & = \gyoung(33313,:;211,::;1)\,,
\\
T^{\mu} & = \gyoung(322211,:;2111,::;1)\,,
&
f_2 f_{\bon} f_2 f_2 T^{\mu} & = \gyoung(333312,:;2111,::;1)\,,
\end{align*}
and $f_{\bon}$ wants to change a $1$ to a $2$ in the rightmost entry of the topmost row. However, the $2$ that was added to the top row of $T^{\mu}$ led to an extra $2$ in the top row of $f_2 f_{\bon} f_2 f_2 T^{\mu}$, which breaks the inclusion. Thus there is no direct limit of $B(\lambda)$ as highest weight crystals, in sharp contrast to $\gl(n)$ where we can take both limits.

\section{Background}
\label{sec:background}

Partitions and tableaux will be written using English convention.  For brevity, write a column of height $h$ as
\[
\gyoung(<x_1>,<x_2>,|2\vdts,<x_h>)
= [x_1, x_2, \dotsc, x_h]^\top
\]

\subsection{Crystals for the superalgebra $\q(n)$}

Let $I_0 = \{1,\dots,n-1\}$ and $I = I_0 \sqcup \{\overline1\}$.  Denote the standard basis vectors of $\ZZ^n$ by $\epsilon_1,\dots,\epsilon_n$ and define the \defn{simple root} $\alpha_i = \epsilon_i - \epsilon_{i+1}$ for each $i \in I_0$.
Let $Q = \bigoplus_{i \in I_0} \ZZ \alpha_i$ be the root lattice and $Q^+ = \bigoplus_{i \in I_0} \ZZ_{\geq 0} \alpha_i$ and $Q^- = \bigoplus_{i \in I_0} \ZZ_{\leq 0} \alpha_i$ be the positive and negative root cones.
Let $\Phi^{\pm} = \{ \pm(\epsilon_i - \epsilon_j) \mid 1 \leq i < j \leq n \}$ be the set of positive and negative roots.
Set
\[
\Lambda^- = \left\{ \lambda = -\lambda_1\epsilon_1 - \cdots - \lambda_n\epsilon_n \in \ZZ^n_{\le0} : \begin{gathered} \lambda_i \ge \lambda_{i+1} \text{ and } \lambda_i = \lambda_{i+1} \text{ implies } \\ \lambda_i = \lambda_{i+1} = 0 \text{ for all } i=1,\dots,n-1 \end{gathered} \right\}.
\]
Equip $\Lambda^-$ with a partial order $\lambda \le \mu$ if and only if $\mu-\lambda \in \Lambda^-$.\footnote{This is not the usual (opposite) dominance order, where the result should be in $Q^-$, but instead should be thought of as the lex order.}
An element $\lambda = -\lambda_1\epsilon_1 - \cdots - \lambda_n\epsilon_n \in \Lambda^-$ will be henceforth be identified with the strict partition $w_0\lambda = (\lambda_n,\dots,\lambda_1)$.  

\begin{dfn}[{\cite[Definition 1.2.1]{K93}}]
An \defn{abstract $\gl(n)$-crystal} is a set $\B$ together with maps $e_i,f_i \colon \B \longrightarrow \B\sqcup\{\zero\}$, $\varphi_i,\varepsilon_i\colon \B \longrightarrow \ZZ\sqcup\{-\infty\}$, for $i \in I_0$, and $\wt\colon \B \longrightarrow \ZZ^n$ satisfying the following conditions:
\begin{enumerate}
\item for any $i \in I_0$ such that $e_ib \neq \zero$, we have $\wt(e_ib) = \wt(b) + \alpha_i$;
\item for any $i \in I_0$ such that $f_ib \neq \zero$, we have $\wt(f_ib) = \wt(b) - \alpha_i$;
\item \label{axiom:phi_ep_wt} for any $i \in I_0$ and $b\in \B$, we have $\varphi_i(b) = \varepsilon_i(b) + \wt_i(b)$;
\item for any $i \in I_0$ and $b,b' \in \B$, we have $f_ib=b'$ if and only if $b = e_ib'$;
\item for any $i \in I_0$ and $b\in \B$ such that $e_ib \neq \zero$, we have $\varepsilon_i(e_ib) = \varepsilon_i(b) - 1$ and $\varphi_i(e_ib) = \varphi_i(b)+1$;
\item for any $i \in I_0$ and $b \in \B$ such that $f_ib \neq \zero$, we have $\varepsilon_i(f_ib) = \varepsilon_i(b)+1$ and $\varphi_i(f_ib) = \varphi_i(b) - 1$;
\item for any $i \in I_0$ and $b\in \B$ such that $\varphi_i(b) = -\infty$, we have $e_ib = f_ib = \zero$.
\end{enumerate}
\end{dfn}
In the above definition (and throughout), the notation $\wt_i(b)$ means $\wt_i(b) = \mu_i - \mu_{i+1}$ provided $\wt(b) = \sum_{i \in I_0} \mu_i \epsilon_i$.

\begin{dfn}[{\cite[Definition 1.9]{GJKKK15}}]
An \defn{abstract $\q(n)$-crystal} is an abstract $\gl(n)$-crystal $\B$ together with maps $e_{\bon},f_{\bon} \colon \B \longrightarrow \B\sqcup\{\zero\}$ such that
\begin{enumerate}
\item $\wt(\B) \subset \ZZ^n_{\ge0}$;
\item $\wt(e_{\overline{1}}b) = \wt(b) + \alpha_1$ provided $e_{\overline{1}}b \neq \zero$;
\item $\wt(f_{\overline{1}}b) = \wt(b) - \alpha_1$ provided $f_{\overline{1}}b \neq \zero$;
\item for any $b,b' \in \B$, $f_{\overline{1}}b=b'$ if and only if $b = e_{\overline{1}}b'$;
\item if $3 \le i \le n-1$, we have
\begin{enumerate}
\item the operators $e_{\overline{1}}$ and $f_{\overline{1}}$ commute with $e_i$ and $f_i$, and
\item if $e_{\overline{1}}b \in \B$, then $\varepsilon_i(e_{\overline{1}}b) = \varepsilon_i(b)$ and $\varphi_i(e_{\overline{1}}b) = \varphi_i(b)$.
\end{enumerate}
\end{enumerate}
\end{dfn}

Let $\B$ and $\C$ be abstract $\q(n)$-crystals.  A \defn{crystal morphism} is a map $\psi \colon \B \longrightarrow \C \sqcup \{\zero\}$ such that
\begin{enumerate}
\item if $b \in \B$ and $\psi(b) \in \C$, then for all $i\in I_0$,
\[
\wt\bigl(\psi(b)\bigr) = \wt(b), \qquad
\varepsilon_i\bigl(\psi(b)\bigr) = \varepsilon_i(b), \qquad
\varphi_i\bigl(\psi(b)\bigr) = \varphi_i(b);
\]
\item for $b \in \B$ and $i\in I$, we have $\psi(e_i b) = e_i \psi(b)$ provided $\psi(e_ib) \neq \zero$ and $e_i\psi(b) \neq \zero$;
\item for $b\in \B$ and $i\in I$, we have $\psi(f_i b) = f_i \psi(b)$ provided $\psi(f_ib) \neq \zero$ and $f_i\psi(b) \neq \zero$.
\end{enumerate}
A morphism $\psi$ is called \defn{strict} if $\psi$ commutes with $e_i$ and $f_i$ for all $i \in I$.  Moreover, a morphism $\psi\colon \B \longrightarrow \C\sqcup\{\zero\}$ is called an \defn{embedding} (resp.\ \defn{isomorphism}) if the induced map $\psi\colon \B\longrightarrow \C \sqcup \{\zero\}$ is injective (resp.\ bijective).

Again let $\B$ and $\C$ be abstract $\q(n)$-crystals.  The \defn{tensor product} $\B \otimes \C$ is defined to be the Cartesian product $\B\times \C$ equipped with crystal operations defined, for $i\in I_0$, by
\begin{equation}\label{eq:tensor}
\begin{aligned}
e_i(b \otimes c) &= \begin{cases}
e_i b \otimes c & \text{if } \varphi_i(c) < \varepsilon_i(b), \\
b \otimes e_i c & \text{if } \varphi_i(c) \ge \varepsilon_i(b),
\end{cases} &
e_{\overline{1}}(b\otimes c) &= \begin{cases}
b \otimes e_{\overline{1}}c & \text{if } e_{\overline{1}}b = f_{\overline{1}}b = \zero, \\
e_{\overline{1}}b \otimes c & \text{otherwise}.
\end{cases}\\
f_i(b \otimes c) &= \begin{cases}
f_i b \otimes c & \text{if } \varphi_i(c) \le \varepsilon_i(b), \\
b \otimes f_i c & \text{if } \varphi_i(c) > \varepsilon_i(b),
\end{cases} & 
f_{\overline{1}}(b\otimes c) &= \begin{cases}
b \otimes f_{\overline{1}}c & \text{if } e_{\overline{1}}b = f_{\overline{1}}b = \zero, \\
f_{\overline{1}}b \otimes c & \text{otherwise}.
\end{cases}
\end{aligned}
\end{equation}

\begin{remark}
This is equivalent to the rule in~\cite[Remark~2.4]{GHPS18} in the situations we consider.  Moreover, this is the reverse convention of the tensor product to that given in~\cite{GJKKK14}.
\end{remark}

The remaining crystal structure of $\B \otimes \C$ is defined as
\begin{align*}
\varepsilon_i(b \otimes c) & = \max\bigl(\varepsilon_i(c), \varepsilon_i(b) - \wt_i(c) \bigr),
\\
\varphi_i(b \otimes c) & = \max\bigl(\varphi_i(b), \varphi_i(c) + \wt_i(b) \bigr),
\\
\wt(b \otimes c) & = \wt(b) + \wt(c).
\end{align*}

Following the method of \cite[p.~74]{K02}, one can construct direct limits in the category of abstract $\q(n)$-crystals.  Indeed, let $\{\B_j\}_{j\in J}$ be a directed system of crystals and let $\psi_{k,j}\colon \B_j \longrightarrow \B_k$, $j\le k$, be a crystal embedding (with $\psi_{j,j}$ being the identity map on $\B_j$) such that $\psi_{k,j} \psi_{j,i} = \psi_{k,i}$.  Let $\vec{\B} = \varinjlim \B_j$ be the direct limit of this system and let $\psi_j \colon \B_j \longrightarrow \vec{\B}$.  Then $\vec{\B}$ has a crystal structure induced from the crystals $\{\B_j\}_{j \in J}$.  Indeed, for $\vec{b} \in \vec{\B}$ and $i\in I$, define $e_i\vec{b}$ to be $\psi_j(e_i b_j)$ if there exists $b_j \in \B_j$ such that $\psi_j(b_j) = \vec{b}$ and $e_i(b_j) \neq 0$.  This definition does not depend on the choice of $b_j$.  If there is no such $b_j$, then set $e_i\vec{b} = 0$.  The definition of $f_i\vec{b}$ is similar.  Moreover, the functions $\wt$, $\varepsilon_i$, and $\varphi_i$ on $\B_j$ extend to functions on $\vec{\B}$.

\subsection{Semistandard decomposition tableaux}

This section summarizes the results of~\cite{GJKKK14} using the conventions of~\cite{GHPS18}.

\begin{dfn} 
Let $\eta = (\eta_n,\dots,\eta_1)$ be a strict partition.  Define $|\eta| = \eta_1 + \cdots + \eta_n$ and $\ell(\eta)$ to be the number of $1\le i \le n$ such that $\eta_i \neq 0$.
\begin{enumerate}
\item The \defn{shifted Young diagram of shape $\eta$} is an array of cells in which the $i$-th row has $\eta_{n+1-i}$ cells, and is shifted $i-1$ units to the right with respect to the top row.  
\item A word $u = u_1u_2\cdots u_N$ is a \defn{hook word} if there exists $1 \le k \le N$ such that
\[
u_1 \ge u_2 \ge \cdots \ge u_k < u_{k+1} < \cdots < u_N.
\]
\item A \defn{semistandard decomposition tableau of shifted shape $\eta$} is a filling $T$ of $\eta$ with letters from $\{1,2,\dots,n\}$ such that
\begin{enumerate}
\item the word $v_i$ formed by reading the $i$-th row from left to right is a hook word of length $\eta_{n-i+1}$, and
\item $v_i$ is a hook subword of maximal length in $v_{i+1}v_i$ for $1 \le i \le \ell(\eta)-1$, where the entries of $v_i$ are taken consecutively.
\end{enumerate}
\item Set $\rd(T)$ to be the word obtained by reading $T$ in rows from right to left starting at the top.\footnote{The choice of the reading word and the opposite order of the tensor product means we obtain the same crystal in~\cite{GJKKK14}; \textit{i.e.}, the two reversals nullify the effects of each other.} Let $\rd_i(T)$ denote the subword of $\rd(T)$ consisting of only the letters $i$ and $i+1$.
Let $\red_i(T)$ denote the subword of $\rd(T)$ formed by successively removing all pairs of letters $(i+1, i)$ (in that order).
\item For $\lambda \in \Lambda^-$, let $\SDT(\lambda)$ denote the set of all semistandard decomposition tableaux of shape $w_0\lambda$.
\end{enumerate}
\end{dfn}

Note that $\red_i(T)$ is a word consisting of $i$'s following by $(i+1)$'s.

We will require the following characterization of semistandard decomposition tableaux.

\begin{prop}[{\cite[Proposition~2.3]{GJKKK14}}]
\label{prop:decomp_characterization}
A tableau $T$ is a decomposition tableau if
\begin{enumerate}
\item every row is a hook word,
\item \label{char:leftmost_box} the leftmost entry of a given row is strictly larger than every entry in the row below it, and
\item neither of the following configurations occur:
\[
\begin{array}{c@{\hspace{50pt}}c}
\gyoung(a,c_2\hdts b) & \gyoung(b_2\hdts c,:::;a) \\[10pt]
a \leq b \leq c & a < b < c \\
\end{array}.
\]
\end{enumerate}
\end{prop}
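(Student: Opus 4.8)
The plan is to localize condition~(b) of the definition of a semistandard decomposition tableau to pairs of adjacent rows, and then to verify the resulting two-row statement by an elementary analysis of hook words. Condition~(1) is exactly condition~(a) of the definition, so assume from now on that every row of $T$ is a hook word, and write $v_i$ for the word read off the $i$-th row; since $\eta$ is a strict partition, $\ell(v_i) > \ell(v_{i+1})$ for every relevant $i$. Condition~(b) mentions only $v_{i+1}$ and $v_i$, so $T$ is a semistandard decomposition tableau if and only if, for each $i$, no hook subword of $v_{i+1}v_i$ has length exceeding $\ell(v_i)$; note that $v_i$ is automatically such a hook subword, being the suffix of $v_{i+1}v_i$ of length $\ell(v_i)$. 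It therefore suffices to fix a ``lower'' hook word $v := v_{i+1}$ and an ``upper'' hook word $v' := v_i$ in their shifted positions and to prove that $v'$ is a maximal-length hook subword of $vv'$ if and only if the leftmost entry of $v'$ is strictly larger than every entry of $v$ and neither configuration in~(3) appears in rows $i+1$ and $i$.

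For the implication ``(b) implies (2) and (3)'' I argue by contraposition, converting a failure of~(2) or~(3) into a hook subword of $vv'$ of length $\ell(v')+1$. If the leftmost entry $a$ of $v'$ satisfies $a \le c$ for some entry $c$ of $v$, then the subword $c\,v'$ of $vv'$ is again a hook word -- prepending to a hook word a letter $\ge$ its first letter either lengthens the initial weakly decreasing run, or, when $v'$ is strictly increasing, makes the first two letters into a weakly decreasing run -- contradicting maximality, so~(2) holds. Likewise, each of the two patterns of~(3) is exactly an obstruction in which an entry $b$ of the lower row, lying in a column overlapping $v'$, can be merged into $v'$ -- extending its weakly decreasing run in the first pattern, splitting open its strictly increasing run in the second -- so that, read together with the remaining relevant entries of $v$ and $v'$, one obtains a hook subword of length $\ell(v')+1$; hence~(3) holds as well. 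The signs of the inequalities ($a \le b \le c$ versus $a < b < c$) simply record whether the merged letter joins a weakly decreasing run or is inserted into a strictly increasing run.

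For the converse, assume~(2) and~(3) and suppose for contradiction that $vv'$ has a hook subword $w$ with $\ell(w) > \ell(v')$. Write $w = x_1 \cdots x_p\, y_1 \cdots y_q$ with the $x$'s taken in order from $v$ and the $y$'s from $v'$; since $q \le \ell(v')$ we have $p \ge 1$, and as $p + q > \ell(v')$ the $y$'s comprise all but at most $p-1$ of the entries of $v'$, in order. In particular, when $p = 1$ the $y$'s exhaust $v'$, the single letter $x_1$ of $w$ drawn from $v$ is strictly below the leftmost entry of $v'$ by~(2), and $x_1\,v'$ being a hook word forces $v'$ to be strictly increasing; reading off $x_1$ together with two suitable entries of $v'$ from columns above and to the right of $x_1$ -- available by the shifted geometry -- then displays the second configuration of~(3). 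For $p \ge 2$ one decomposes $w$ into its maximal weakly decreasing prefix and strictly increasing suffix and tracks the positions of $x_p$, of $y_1$, and of the entries of $v'$ omitted by $w$; using~(2) to pin $x_p$ below the leftmost entry of $v'$, a case analysis on the location of the weakly-decreasing/strictly-increasing break of $w$ and on which entries of $v'$ are omitted yields, from an omitted entry of $v'$ together with suitable entries of $v$ and $v'$, one of the two triples tabulated in~(3). In every case this contradicts~(3), so no such $w$ exists and~(b) holds.

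The step I expect to be the genuine obstacle is the $p \ge 2$ analysis: organizing the cases so that each one terminates in one of the two configurations of~(3) rather than in some third pattern. Here I would try to reduce to the one-extra-letter situation by choosing $w$ of maximal length using the fewest letters of $v$ and arguing that minimality lets one trade an omitted entry of $v'$ for a letter of $v$. The shifted shape is essential throughout, since it controls which cells of rows $i$ and $i+1$ share a column, hence which triples $(a,b,c)$ can physically occur; the boundary situations -- $v'$ strictly increasing, a short overlap between the two rows, repeated entries -- require a little care but are routine once the canonical hook decomposition of $v'$ is fixed.
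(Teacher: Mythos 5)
The paper does not prove this statement at all: it is quoted verbatim from \cite[Proposition~2.3]{GJKKK14}, so there is no in-paper argument to compare yours against, and your attempt has to stand on its own. Judged that way, it has a genuine gap, and it sits exactly in the direction the proposition actually asserts. As stated, the proposition is the implication (1)\,\&\,(2)\,\&\,(3) $\Rightarrow$ $T$ is a decomposition tableau, i.e.\ that no hook subword of $v_{i+1}v_i$ exceeds $\ell(v_i)$ in length. That is your ``converse,'' and there you only settle the case $p=1$ (which is fine: $x_1<v'_1$ forces $v'$ strictly increasing, and the cell above $x_1$ together with the leftmost entry of $v'$ gives a type U pattern). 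For $p\ge 2$ you announce ``a case analysis on the location of the break of $w$ and on which entries of $v'$ are omitted'' and then explicitly defer it (``the step I expect to be the genuine obstacle\dots I would try to reduce\dots''). That case analysis is the entire content of the proposition; the proposed reduction --- choose $w$ using the fewest letters of $v$ and ``trade an omitted entry of $v'$ for a letter of $v$'' --- is a plan, not an argument, and it is not clear it closes, since a maximal hook subword may be forced to omit several consecutive entries of $v'$ and compensate with a block of $v$, and one must then locate a forbidden triple in the correct columns of the shifted diagram.

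A secondary soft spot: in your contrapositive direction (which the stated proposition does not even require, though the paper does use it implicitly elsewhere), the phrase ``an entry $b$ of the lower row can be merged into $v'$'' ignores that in the concatenation $v_{i+1}v_i$ every chosen letter of $v=v_{i+1}$ must precede every chosen letter of $v'=v_i$. So one cannot insert a lower-row letter into the middle of $v'$; one must discard a prefix of $v'$ up to the merge point and replace it by at least one more letter taken from $v$, and checking that the resulting word is a hook word of length $\ell(v')+1$ uses the hook structure of $v$ and the column positions of $a$, $b$, $c$ --- the same bookkeeping you are avoiding in the $p\ge2$ case. The prepending argument for condition (2) is correct, and the overall strategy (localize to two adjacent rows, analyze hook subwords of $vv'$) is a reasonable route to the result in \cite{GJKKK14}, but as written the proof of the stated implication is incomplete.
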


The prohibited configurations in Proposition~\ref{prop:decomp_characterization} will be referred to as \defn{type L} and \defn{type U}, respectively.

\begin{dfn}\label{dfn:finite_operators}
Let $T$ be a semistandard decomposition tableau of shape $w_0\lambda$.
\begin{enumerate}
\item Suppose $i \in I_0$.
\begin{enumerate}
\item If there is no $i+1$ in $\red_i(T)$, then $e_iT = \zero$. Otherwise, $e_iT$ is the tableau obtained from $T$ by changing the $(i+1)$-box corresponding to the leftmost $i+1$ in the subword above to an $i$-box.
\item If there is no $i$ in $\red_i(T)$, then $f_iT = \zero$. Otherwise, $f_iT$ is the tableau obtained from $T$ by changing the $i$-box corresponding to the rightmost $i$ remaining in the subword above to an $(i+1)$-box. 
\end{enumerate}
\item Suppose $i = \bon$.
\begin{enumerate}
\item If the leftmost letter in $\rd_1(T)$ is $1$, then $e_{\bon}T =0$. Otherwise $e_{\bon}T$ is the tableau obtained from $T$ by changing the $2$-box corresponding to the leftmost $2$ in $\rd_1(T)$ to a $1$-box.
\item If the leftmost letter in $\rd_1(T)$ is $2$, then $f_{\bon}T =0$. Otherwise $f_{\bon}T$ is the tableau obtained from $T$ by changing the $1$-box corresponding to the leftmost $1$ in $\rd_1(T)$ to a $2$-box.
\end{enumerate}
\end{enumerate}
\end{dfn}

For a $\lambda \in \Lambda^-$ with $\ell(\lambda) = N$, define $L^\lambda \in \SDT(\lambda)$ to be the tableau whose $i$-th row from the bottom contains only the letter $i$.  As described in the next theorem, $L^\lambda$ is a generator of $\SDT(\lambda)$, and a generalization of $L^\lambda$ will serve as a generator for the candidate for $B(-\infty)$ in the next section.

\begin{ex}
Let $n=5$ and $w_0\lambda = (7,4,3,2,1)$.  Then
\[
L^\lambda = \gyoung(5555555,:;4444,::;333,:::;22,::::;1)\ .
\]
\end{ex}

An element $T \in \SDT(\lambda)$ is called \defn{lowest weight} (resp.\ \defn{highest weight}) if $f_i T = \zero$ (resp.~$e_iT  = \zero$) for all $i \in I$. 

\begin{thm}[{\cite[Theorem 2.5]{GJKKK14}}]
\label{thm:SDT_crystal}
For $\lambda \in \Lambda^-$, the set $\SDT(\lambda)$ together with the operators defined in Definition \ref{dfn:finite_operators} form an abstract $\q(n)$-crystal isomorphic to the crystal of the irreducible highest weight $\q(n)$-module with highest weight $w_0\lambda$.  Moreover, $\SDT(\lambda)$ is generated by $L^{\lambda}$ (hence, connected).
\end{thm}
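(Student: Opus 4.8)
The plan is to embed $\SDT(\lambda)$ into a tensor power of the standard $\q(n)$-crystal using the reading word, to check that the operators of Definition~\ref{dfn:finite_operators} agree with the operators induced from that tensor power, and then to pin down the image by a weight (or character) computation. Throughout write $N = |w_0\lambda|$ for the number of boxes of $w_0\lambda$.

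First I would set up the standard $\q(n)$-crystal $\B = \{1 < 2 < \cdots < n\}$ of the vector representation $\CC^{n|n}$: for $i \in I_0$ the operator $f_i$ sends $i \mapsto i+1$ and $e_i$ sends $i+1 \mapsto i$ (with $\zero$ otherwise), while $e_{\bon}$ sends $2 \mapsto 1$, $f_{\bon}$ sends $1 \mapsto 2$, and $e_{\bon},f_{\bon}$ annihilate all other letters. Iterating the tensor product rule~\eqref{eq:tensor} on a word $u_1 \otimes \cdots \otimes u_N \in \B^{\otimes N}$ yields the familiar signature description: for $i \in I_0$ one cancels adjacent $(i+1,i)$ pairs in the subword of $i$'s and $(i+1)$'s, and then $e_i$ changes the leftmost surviving $i+1$ to $i$ while $f_i$ changes the rightmost surviving $i$ to $i+1$; for $\bon$ one looks at the subword of $1$'s and $2$'s, and $e_{\bon}$ changes its leftmost $2$ to $1$ (being $\zero$ if that leftmost letter is a $1$) while $f_{\bon}$ changes its leftmost $1$ to $2$ (being $\zero$ if that leftmost letter is a $2$). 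Since $\red_i(T)$ is precisely the subword obtained from $\rd_i(T)$ by cancelling $(i+1,i)$ pairs, this signature rule applied to $\rd(T)$ reproduces Definition~\ref{dfn:finite_operators} verbatim, the conventions for $\rd$ and for~\eqref{eq:tensor} having been arranged so the two reversals annihilate each other.

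The heart of the argument is then the following. Let $\Psi \colon \SDT(\lambda) \to \B^{\otimes N}$ be $T \mapsto \rd(T)$. This is injective, since $T$ is recovered by refilling the fixed shifted shape $w_0\lambda$ from its reading word. By the previous paragraph, showing that $\Psi$ intertwines $e_i$ and $f_i$ for all $i \in I$ amounts to the combinatorial \emph{closure} claim: whenever $e_i T$, $f_i T$, $e_{\bon}T$, or $f_{\bon}T$ is nonzero, changing the single entry of $T$ prescribed by the signature rule again yields a semistandard decomposition tableau of shape $w_0\lambda$. I would prove this using the characterization of Proposition~\ref{prop:decomp_characterization}: after the entry change, one checks that the modified row is still a hook word, that the leftmost entry of each row remains strictly larger than every entry of the row below it, and that no type L or type U configuration is created. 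I expect this case analysis to be the main obstacle; the delicate point is the interaction of the change with neighbouring rows, equivalently with the maximality requirement that $v_i$ be a longest hook subword of $v_{i+1}v_i$, and what makes it tractable is that the cancellation defining $\red_i$ localizes the changed box to a specific position within its row. Granting closure, the functions $\wt$, $\varepsilon_i$, and $\varphi_i$ on $\SDT(\lambda)$ agree with those computed in $\B^{\otimes N}$, so $\SDT(\lambda)$ is an abstract $\q(n)$-crystal and $\Psi$ is a strict embedding.

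To finish, a strict embedding identifies $\SDT(\lambda)$ with a union of connected components of $\B^{\otimes N}$; by the semisimplicity of the category of tensor powers of the vector representation of $\q(n)$ \cite{GJKK10}, each such component is the (connected) crystal of an irreducible highest weight $\q(n)$-module, with character a Schur $P$-function, and distinct components have linearly independent characters. It therefore suffices to observe that $L^{\lambda}$ lies in $\SDT(\lambda)$ and is its unique lowest weight element, which one checks by comparing a lowest weight tableau with $L^{\lambda}$ row by row via Proposition~\ref{prop:decomp_characterization}; then $\SDT(\lambda)$ is a single component, generated by $L^{\lambda}$ and hence connected, and its lowest weight $\wt(L^{\lambda})$ identifies it as the crystal of the irreducible module of highest weight $w_0\lambda$. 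As a cross-check, and an alternative to the uniqueness argument, one may instead compute $\ch\SDT(\lambda)$ directly from the tableau description, obtaining the Schur $P$-function $P_{w_0\lambda}$, and conclude from linear independence of Schur $P$-functions that $\SDT(\lambda)$ is irreducible.
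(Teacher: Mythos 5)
This theorem is not proved in the paper at all: it is quoted from \cite{GJKKK14} (Theorem 2.5 there), so there is no internal argument to compare against, and your outline should be judged as a reconstruction of the cited proof. In that respect your strategy is the standard one and essentially matches the source: embed $\SDT(\lambda)$ into $\B^{\otimes N}$ via $\rd$, check that Definition \ref{dfn:finite_operators} is the signature rule induced by the tensor rule \eqref{eq:tensor} (your bookkeeping here is correct, including that with this convention $e_{\bon},f_{\bon}$ act on the leftmost letter of $\rd_1(T)$ and that $\red_i$ is the bracket cancellation), and then identify the image using the crystal basis theory of \cite{GJKK10}.

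The genuine gap is that the two statements carrying all of the content are asserted rather than proved. First, the closure claim --- that whenever the signature rule on $\rd(T)$ gives a nonzero result, the modified filling is again a semistandard decomposition tableau of shape $w_0\lambda$ --- is precisely what makes the image of $\Psi$ a union of connected components, and it is the crux of \cite{GJKKK14}; it requires a genuine case analysis with Proposition \ref{prop:decomp_characterization} (hook-word preservation, condition (\ref{char:leftmost_box}), and exclusion of type L/U configurations across adjacent rows), which you defer with ``I would prove this.'' Second, the uniqueness of the lowest weight element $L^{\lambda}$ in $\SDT(\lambda)$ is likewise only asserted; an actual argument is needed (compare the work the present paper must do in Proposition \ref{prop:alg} for the analogous statement in $\SDT(-\infty)$). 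Granting those two verifications, your concluding step is sound, provided you also invoke from \cite{GJKK10} that each connected component of $\B^{\otimes N}$ is the crystal of an irreducible highest weight module, that such a crystal is connected with a unique lowest weight element, and that its lowest weight is $w_0$ applied to its highest weight; these identify the single component containing $L^{\lambda}$ as $B(w_0\lambda)$. So: right architecture, same as the cited reference, but the pivotal combinatorial verifications --- which are the theorem --- are missing.
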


Let us briefly remark on the notation $w_0\lambda$. Typically $w_0$ denotes the long element of the symmetric group $\mathfrak{S}_n$, which acts naturally on $\ZZ^n$. From~\cite[Theorem~2.5]{GJKKK14}, the lowest weight that appears in the character of $B(w_0\lambda)$ has weight $w_0 \cdot \lambda$, where now $w_0$ is acting on $\lambda$ as an element of the Weyl group of $\gl(n)$, up to shifting by $(1, \dotsc, 1)$. However, this does not change the structure of the crystals (up to this shift in the weight).

\section{Main results}
\label{sec:results}

In this section, we prove our main results. To do so, we first give $\q(n)$-analogs of some auxiliary crystals used in the construction of $B(\infty)$. Next, an explicit description of the direct limit $\SDT(-\infty)$ is given, and this object is our proposed $B(-\infty)$. It is then shown that $\{ \SDT(\lambda) \mid \lambda \in \Lambda^-\}$ forms a directed system and is isomorphic to $\SDT(-\infty)$.  We conclude with a method to recover $\SDT(\lambda)$ from $\SDT(-\infty)$ in some cases.

\subsection{Auxiliary crystals}

There are two abstract crystals which are necessary for later work.  The first of the two crystals defined next simply shift the weights of a given crystal when tensored, and the second will be used to construct $\SDT(\lambda)$ from $\SDT(-\infty)$.

\begin{dfn}\label{defn:aux}
Let $\lambda = (\lambda_1,\dotsc,\lambda_n) \in \ZZ^n$.
\begin{enumerate}
\item \label{item:T_crystal}

Define $\T_\lambda = \{ t_\lambda\}$ with operations
\begin{gather*}
e_it_\lambda = e_{\overline{1}}t_\lambda = f_it_\lambda = f_{\overline{1}}t_\lambda = \zero,
\\
\varepsilon_i(t_\lambda) = \varphi_i(t_\lambda) = -\infty,
\\
\wt(t_\lambda) = \lambda,
\end{gather*}
for $i\in I_0$.
\item \label{item:R_crystal}
Define $\R_\lambda^\vee = \{r_\lambda^\vee\}$ with operations
\begin{gather*}
e_ir_\lambda^\vee = e_{\overline{1}}r_\lambda^\vee = f_ir_\lambda^\vee = f_{\overline{1}}r_\lambda^\vee = \zero,
\\
\varepsilon_i(r_\lambda^\vee) = 0, 
\ 
\varphi_i(r_\lambda^\vee) = \lambda_{i}-\lambda_{i+1},
\\
\wt(r_\lambda^\vee) = \lambda,
\end{gather*}
for $i\in I_0$.
\end{enumerate}
\end{dfn}

The following is clear from the definitions.

\begin{lemma}
\label{lemma:RT_crystals}
If $\lambda \in \ZZ_{\ge0}^n$, then $\T_\lambda$ and $\R_{\lambda}^{\vee}$ are abstract $\q(n)$-crystals.
\end{lemma}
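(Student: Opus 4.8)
The statement to be proved is Lemma \ref{lemma:RT_crystals}: if $\lambda \in \ZZ_{\ge0}^n$, then $\T_\lambda$ and $\R_\lambda^\vee$ are abstract $\q(n)$-crystals. Since each of these is a one-element set with all crystal operators sending the unique element to $\zero$, the verification is a direct check of the axioms in the definitions of an abstract $\gl(n)$-crystal and an abstract $\q(n)$-crystal. The plan is to simply go down both lists of axioms and observe that each holds vacuously or trivially, with the only genuine content being the weight-positivity requirement $\wt(\B) \subset \ZZ^n_{\ge 0}$, which is exactly the hypothesis $\lambda \in \ZZ^n_{\ge 0}$.

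For $\T_\lambda$: the axioms of an abstract $\gl(n)$-crystal involving $e_i b \ne \zero$ or $f_i b \ne \zero$ are vacuous since $e_i t_\lambda = f_i t_\lambda = \zero$; axiom \eqref{axiom:phi_ep_wt} ($\varphi_i = \varepsilon_i + \wt_i$) holds because $-\infty = -\infty + \wt_i(t_\lambda)$ with the convention $-\infty + m = -\infty$; axiom (4) ($f_i b = b'$ iff $b = e_i b'$) holds since both sides are always false; axioms (5) and (6) are vacuous; axiom (7) ($\varphi_i(b) = -\infty \implies e_i b = f_i b = \zero$) holds since indeed $\varphi_i(t_\lambda) = -\infty$ and $e_i t_\lambda = f_i t_\lambda = \zero$. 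Then for the additional $\q(n)$-axioms: $\wt(\T_\lambda) = \{\lambda\} \subset \ZZ^n_{\ge 0}$ by hypothesis; the weight conditions for $e_{\bon}, f_{\bon}$ are vacuous; axiom (4) holds since both sides are false; and for $3 \le i \le n-1$, commutativity of $e_{\bon}, f_{\bon}$ with $e_i, f_i$ holds because all these operators kill $t_\lambda$, and the condition on $\varepsilon_i, \varphi_i$ under $e_{\bon}$ is vacuous since $e_{\bon} t_\lambda = \zero \notin \B$. The verification for $\R_\lambda^\vee$ is identical except that now $\varepsilon_i(r_\lambda^\vee) = 0$ and $\varphi_i(r_\lambda^\vee) = \lambda_i - \lambda_{i+1}$ are finite; one must check axiom \eqref{axiom:phi_ep_wt}, which reads $\varphi_i(r_\lambda^\vee) = \varepsilon_i(r_\lambda^\vee) + \wt_i(r_\lambda^\vee)$, i.e. $\lambda_i - \lambda_{i+1} = 0 + (\lambda_i - \lambda_{i+1})$, which is immediate. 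Axiom (7) is now vacuous since $\varphi_i(r_\lambda^\vee) \neq -\infty$, and axiom (5)(b) of the $\q(n)$-definition is vacuous as before.

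There is essentially no obstacle here: the lemma is a routine bookkeeping check, which is why the authors say it "is clear from the definitions." The only point requiring the hypothesis — and hence the only thing worth writing down explicitly — is that $\wt(t_\lambda) = \wt(r_\lambda^\vee) = \lambda \in \ZZ^n_{\ge 0}$, which is precisely the $\q(n)$-crystal axiom (1). Accordingly I would present the proof as a single short paragraph noting that all axioms of an abstract $\gl(n)$-crystal hold vacuously or trivially for a singleton crystal whose operators all return $\zero$ (checking \eqref{axiom:phi_ep_wt} using the convention that $-\infty + m = -\infty$ and, for $\R_\lambda^\vee$, the identity $\lambda_i - \lambda_{i+1} = 0 + (\lambda_i - \lambda_{i+1})$), and that the extra $\q(n)$-axioms likewise hold once one observes $\wt = \lambda \in \ZZ^n_{\ge 0}$.
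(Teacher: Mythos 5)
Your verification is correct and is exactly the routine axiom check the paper has in mind when it states the lemma ``is clear from the definitions'' (the paper gives no further proof). You correctly isolate the only place the hypothesis $\lambda \in \ZZ^n_{\ge 0}$ is used, namely the $\q(n)$-axiom $\wt(\B) \subset \ZZ^n_{\ge 0}$, and the remaining checks (including $\varphi_i = \varepsilon_i + \wt_i$ for $\R_\lambda^\vee$) are handled as in the paper's intended argument.
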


\subsection{Candidate for $B(-\infty)$}

\begin{dfn} A semistandard decomposition tableau $T$ for $\q(n)$ is called \defn{dual large} if
\begin{enumerate}
\item $T$ has $n$ rows, and
\item for all $1 \le i \le n$, the number of leftmost $i$-boxes in row $n-i+1$ is strictly greater than the total number of boxes in row $n-i+2$.
\end{enumerate} 
\end{dfn}

\begin{ex}
Consider the following semistandard decomposition tableaux of shape $(4,3,1)$ for $\q(3)$:
\[
\text{dual large: }\quad\gyoung(3333,:;222,::;1)\,,
\qquad\qquad
\text{not dual large: }\quad\gyoung(3333,:;212,::;1)\,.
\]
\end{ex}

\begin{dfn}
A semistandard decomposition tableau $T$ for $\q(n)$ is called \defn{dual marginally large} if
\begin{enumerate}
\item it has $n$ rows, and
\item for all $1 \le i \le n$, the number of leftmost $i$-boxes in row $n-i+1$ is greater than the total number of boxes in row $n-i+2$ by exactly one.
\end{enumerate}
Denote the set of all dual marginally large semistandard tableaux for $\q(n)$ by $\SDT(-\infty)$.
\end{dfn}

We note that a dual marginally large tableau is dual large.

\begin{ex}\label{ex:lgvsmg}
Consider the following semistandard decomposition tableaux for $\q(3)$:
\[
\text{not dual marginally large: }\gyoung(3333,:;222,::;1)\,,
\qquad
\text{dual marginally large: }\gyoung(333,:;22,::;1)\,.
\]
\end{ex}

A column $C$ of height $h$ is called \defn{trivial} if the column is $C = [n, n-1, \dotsc, n+1-h]^\top$ and every entry to the left of an $i \in C$ is $i$ (if it exists).
We will say we \defn{push in} (resp.\ \defn{push out}) a trivial column of height $h$ from $C$ if we add (resp.\ remove) a box with an $i$ to the left of row $n+1-i$ and sliding the remaining boxes back to a shifted shape.
Furthermore, in the case that a tableau is dual large, the act of pushing in (resp.\ out) a column of height $r$ is equivalent to adding (resp.\ removing) a trivial column of height $r$. However, we will need to consider pushing in/out columns for the case when the tableau is not dual large.

\begin{ex}
In the following, a trivial column of height $2$ is pushed in to the given semistandard decomposition tableaux, where the pushed in trivial column are the shaded boxes:
\[
\gyoung(3333,:;222,::;1)
\longrightarrow
\gyoung(!<\fb>3!<\fe>3333,:!<\fb>2!<\fe>222,:::;1)
\longrightarrow
\gyoung(!<\fb>3!<\fe>3333,:!<\fb>2!<\fe>222,::;1)\,. 
\]
\end{ex}

The following lemma is straightforward from the definitions, where the uniqueness follows from the diamond lemma.

\begin{lemma}
\label{lemma:unique_marginally_large}
Given any dual large tableau $T$, we can construct a unique dual marginally large tableau $T_M$ by pushing out to the left trivial columns.
\end{lemma}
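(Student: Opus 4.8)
The plan is to prove existence by repeatedly applying the push-out operation until it stabilizes, and uniqueness via a local confluence argument (the diamond lemma). Given a dual large tableau $T$ with $n$ rows, define for each $i$ the \emph{excess} $m_i(T) \ge 0$ to be the number of leftmost $i$-boxes in row $n-i+1$ minus the total number of boxes in row $n-i+2$ (minus $1$ when we want this to vanish at the marginally large tableau, but it is cleaner to keep the raw difference and ask it to equal $1$). Dual largeness says each $m_i(T)$ is at least $1$ (interpreting the bottom row as having $m_n(T)$ equal to its number of leftmost $n$-boxes, since there is no row $n+1$). The total excess $\sum_i (m_i(T) - 1)$ is a nonnegative integer, and I will show that whenever $T$ is not yet dual marginally large there is some $i$ with $m_i(T) \ge 2$ admitting a legal push-out of a trivial column, and that this operation strictly decreases the total excess. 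Since the total excess is a nonnegative integer, the process terminates after finitely many steps in a dual marginally large tableau $T_M$.

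The key steps, in order: First I would check that a dual large tableau which is not dual marginally large actually admits a push-out. The hypothesis $m_i(T) \ge 2$ for some $i$ means row $n-i+1$ starts with at least two $i$'s more than fit above it; by the definition of dual largeness applied to rows $n-i+1, n-i+2, \dots, n$, one verifies that the leftmost column of the relevant staircase-bottom portion is in fact trivial in the sense defined just before the lemma — every entry to the left of an $i$ in that column is an $i$ — so removing it ("pushing out to the left") and sliding produces a genuine semistandard decomposition tableau. This uses Proposition~\ref{prop:decomp_characterization}: removing the trivial column cannot create a type L or type U configuration because the column removed consists of the largest possible entries $n, n-1, \dotsc$ in a staircase, and the entries immediately to its right are no smaller. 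Second, I would observe that the push-out decreases $\sum_i m_i(T)$ (hence the total excess) by exactly one — removing one box from each affected row lowers the count of leftmost $i$-boxes in row $n-i+1$ by one while lowering the "total boxes in row $n-i+2$" by one as well for the rows below, but the net effect on $\sum_i(m_i - 1)$ is a decrease of one because the topmost affected row loses a box with nothing above compensating. Third, termination and existence follow immediately.

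For uniqueness I would invoke the diamond (Newman's) lemma as the paper suggests: the push-out operations generate a rewriting system on dual large tableaux, it is terminating by the total-excess function above, so it suffices to prove local confluence. Two distinct applicable push-outs, say removing trivial columns associated to indices $i \ne j$, act on disjoint or nested sets of rows in a way that commutes — after performing one, the other is still a legal push-out (the trivial-column condition is preserved away from the rows just modified, and on overlapping rows a short case check shows the column remains trivial), and the two orders yield the same tableau. Hence all maximal rewriting sequences end at the same normal form, which is $T_M$. The main obstacle I anticipate is the verification in the first step — namely that the column being pushed out genuinely satisfies the "trivial column" condition (every entry to the left of an $i$ in the column equals $i$) and that sliding preserves the semistandard decomposition tableau axioms of Proposition~\ref{prop:decomp_characterization}; this is where dual largeness must be used carefully, row by row, and where one must rule out type U configurations appearing after the slide. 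Everything else is bookkeeping with the excess statistic and a routine commutation check for confluence.
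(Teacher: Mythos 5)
Correct, and essentially the paper's own argument: the paper disposes of this lemma as ``straightforward from the definitions, where the uniqueness follows from the diamond lemma,'' which is precisely your scheme of termination (via the excess statistic) plus local confluence, fleshed out with the explicit verification that a legal push-out exists whenever some excess exceeds one. One small bookkeeping slip worth fixing: the uncompensated unit drop in $\sum_i (m_i - 1)$ comes from the \emph{bottom} row of the pushed-out column of height $h$ (the row containing the letter $n+1-h$, since row $h+1$ loses no box), not from the topmost affected row — but this does not affect the conclusion.
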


\begin{ex}
In Example~\ref{ex:lgvsmg}, the tableau on the left has one trivial column: the column $[3,2]^\top$ on the right.  Removing this column yields the dual marginally large tableau on the right in the same example.
\end{ex}

\begin{dfn}\label{dfn:up_operators}
Let $T \in \SDT(-\infty)$ for $\q(n)$.
\begin{enumerate}
\item Suppose $i \in I_0$.
\begin{enumerate}
\item\label{item:upe} Let $T'$ be the tableau obtained from $T$ by changing the $(i+1)$-box corresponding to the leftmost $i+1 \in \red_i(T)$ to an $i$-box.  If $T'$ is dual marginally large, then $T' = e_iT$.  Otherwise, let $T''$ be the tableau obtained from $T'$ by adding a $(n-k+1)$-box in row $k$, for each $1\le k \le n-i+1$.  Then $T'' = e_iT$.
\item\label{item:upf} If there is no such $i \in \red_i(T)$, then $f_iT = \zero$. Otherwise, let $T'$ be the tableau obtained from $T$ by changing the $i$-box corresponding to the rightmost $i \in \red_i(T)$ to an $(i+1)$-box.  If $T'$ is dual marginally large, then $T' = f_iT$.  Otherwise, let $T''$ be the tableau obtained from $T'$ by removing a $(n-k+1)$-box in row $k$, for each $1\le k \le n-i+1$.  Then $T'' = f_iT$.
\end{enumerate}
\item Both $e_{\bon}$ and $f_{\bon}$ are defined exactly as in Definition \ref{dfn:finite_operators}, except for the need to maintain the dual marginally large condition as in (\ref{item:upe}) and (\ref{item:upf}) above.
\end{enumerate}
\end{dfn}

\begin{ex}
Let $n=3$ and
\[
T = \gyoung(33332,:;221,::;1) \in \SDT(-\infty).
\]
Then $\rd(T) = 233331221$.  After pairing off all possible $(2,1)$, there is no $1$ remaining and the leftmost $2$ remaining corresponds to the underlined $\underline{\color{red}2}$ in $233331\underline{\color{red}2}21$.  Hence $f_1T = \zero$, but
\[
e_1T = \gyoung(333332,:;2211,::;1)
\qquad\text{and}\qquad
e_{\overline{1}}T = \gyoung(33331,:;221,::;1).
\]
Note that a $3$-box needed to be added to the first row and a $2$-box needed to be added to the second row to maintain the dual marginally large condition in $e_1T$. In other words, we pushed in a trivial column of height $2$ (from the left).

After pairing off all possible $(3,2)$ in $\rd(T)$, the leftmost $3$ and rightmost $2$ remaining are the underlined letters in $\underline{\color{red}2}\underline{\color{red}3}3331221$.  Hence
\[
e_2T = \gyoung(333322,:;221,::;1)
\qquad\text{and}\qquad
f_2T = \gyoung(3333,:;221,::;1)\ .
\]
Note that a $3$-box needed to be added to the first row of $e_2T$ and removed from the first row of $f_2T$ to maintain the dual marginally large condition. Note that a $3$-box is a trivial column of height $1$.

A diagram of the crystal graph $\SDT(-\infty)$ up to height $3$ is included in Figure~\ref{fig:up_graph}.
\end{ex}

\begin{figure}[t]
\[
\begin{tikzpicture}[>=latex,line join=bevel,,every node/.style={scale=.4},yscale=.55,xscale=0.375]
  \node (node_12) at (757.0bp,328.0bp) [draw,draw=none] {${\def\lr#1{\multicolumn{1}{|@{\hspace{.6ex}}c@{\hspace{.6ex}}|}{\raisebox{-.3ex}{$#1$}}}\raisebox{-.6ex}{$\begin{array}[b]{*{6}c}\cline{1-6}\lr{3}&\lr{3}&\lr{3}&\lr{3}&\lr{3}&\lr{1}\\\cline{1-6}&\lr{2}&\lr{2}&\lr{1}&\lr{2}\\\cline{2-5}&&\lr{1}\\\cline{3-3}\end{array}$}}$};
  \node (node_26) at (473.0bp,124.0bp) [draw,draw=none] {${\def\lr#1{\multicolumn{1}{|@{\hspace{.6ex}}c@{\hspace{.6ex}}|}{\raisebox{-.3ex}{$#1$}}}\raisebox{-.6ex}{$\begin{array}[b]{*{4}c}\cline{1-4}\lr{3}&\lr{3}&\lr{3}&\lr{2}\\\cline{1-4}&\lr{2}&\lr{2}\\\cline{2-3}&&\lr{1}\\\cline{3-3}\end{array}$}}$};
  \node (node_27) at (588.0bp,226.0bp) [draw,draw=none] {${\def\lr#1{\multicolumn{1}{|@{\hspace{.6ex}}c@{\hspace{.6ex}}|}{\raisebox{-.3ex}{$#1$}}}\raisebox{-.6ex}{$\begin{array}[b]{*{5}c}\cline{1-5}\lr{3}&\lr{3}&\lr{3}&\lr{3}&\lr{2}\\\cline{1-5}&\lr{2}&\lr{2}&\lr{1}\\\cline{2-4}&&\lr{1}\\\cline{3-3}\end{array}$}}$};
  \node (node_24) at (849.0bp,328.0bp) [draw,draw=none] {${\def\lr#1{\multicolumn{1}{|@{\hspace{.6ex}}c@{\hspace{.6ex}}|}{\raisebox{-.3ex}{$#1$}}}\raisebox{-.6ex}{$\begin{array}[b]{*{7}c}\cline{1-7}\lr{3}&\lr{3}&\lr{3}&\lr{3}&\lr{3}&\lr{2}&\lr{2}\\\cline{1-7}&\lr{2}&\lr{2}&\lr{1}&\lr{2}\\\cline{2-5}&&\lr{1}\\\cline{3-3}\end{array}$}}$};
  \node (node_22) at (506.0bp,328.0bp) [draw,draw=none] {${\def\lr#1{\multicolumn{1}{|@{\hspace{.6ex}}c@{\hspace{.6ex}}|}{\raisebox{-.3ex}{$#1$}}}\raisebox{-.6ex}{$\begin{array}[b]{*{6}c}\cline{1-6}\lr{3}&\lr{3}&\lr{3}&\lr{3}&\lr{2}&\lr{2}\\\cline{1-6}&\lr{2}&\lr{2}&\lr{1}\\\cline{2-4}&&\lr{1}\\\cline{3-3}\end{array}$}}$};
  \node (node_21) at (321.0bp,226.0bp) [draw,draw=none] {${\def\lr#1{\multicolumn{1}{|@{\hspace{.6ex}}c@{\hspace{.6ex}}|}{\raisebox{-.3ex}{$#1$}}}\raisebox{-.6ex}{$\begin{array}[b]{*{5}c}\cline{1-5}\lr{3}&\lr{3}&\lr{3}&\lr{2}&\lr{2}\\\cline{1-5}&\lr{2}&\lr{2}\\\cline{2-3}&&\lr{1}\\\cline{3-3}\end{array}$}}$};
  \node (node_10) at (588.0bp,328.0bp) [draw,draw=none] {${\def\lr#1{\multicolumn{1}{|@{\hspace{.6ex}}c@{\hspace{.6ex}}|}{\raisebox{-.3ex}{$#1$}}}\raisebox{-.6ex}{$\begin{array}[b]{*{5}c}\cline{1-5}\lr{3}&\lr{3}&\lr{3}&\lr{3}&\lr{1}\\\cline{1-5}&\lr{2}&\lr{2}&\lr{1}\\\cline{2-4}&&\lr{1}\\\cline{3-3}\end{array}$}}$};
  \node (node_28) at (670.0bp,328.0bp) [draw,draw=none] {${\def\lr#1{\multicolumn{1}{|@{\hspace{.6ex}}c@{\hspace{.6ex}}|}{\raisebox{-.3ex}{$#1$}}}\raisebox{-.6ex}{$\begin{array}[b]{*{6}c}\cline{1-6}\lr{3}&\lr{3}&\lr{3}&\lr{3}&\lr{3}&\lr{2}\\\cline{1-6}&\lr{2}&\lr{2}&\lr{1}&\lr{1}\\\cline{2-5}&&\lr{1}\\\cline{3-3}\end{array}$}}$};
  \node (node_29) at (798.0bp,226.0bp) [draw,draw=none] {${\def\lr#1{\multicolumn{1}{|@{\hspace{.6ex}}c@{\hspace{.6ex}}|}{\raisebox{-.3ex}{$#1$}}}\raisebox{-.6ex}{$\begin{array}[b]{*{6}c}\cline{1-6}\lr{3}&\lr{3}&\lr{3}&\lr{3}&\lr{3}&\lr{2}\\\cline{1-6}&\lr{2}&\lr{2}&\lr{1}&\lr{2}\\\cline{2-5}&&\lr{1}\\\cline{3-3}\end{array}$}}$};
  \node (node_9) at (482.0bp,226.0bp) [draw,draw=none] {${\def\lr#1{\multicolumn{1}{|@{\hspace{.6ex}}c@{\hspace{.6ex}}|}{\raisebox{-.3ex}{$#1$}}}\raisebox{-.6ex}{$\begin{array}[b]{*{4}c}\cline{1-4}\lr{3}&\lr{3}&\lr{3}&\lr{1}\\\cline{1-4}&\lr{2}&\lr{2}\\\cline{2-3}&&\lr{1}\\\cline{3-3}\end{array}$}}$};
  \node (node_6) at (307.0bp,328.0bp) [draw,draw=none] {${\def\lr#1{\multicolumn{1}{|@{\hspace{.6ex}}c@{\hspace{.6ex}}|}{\raisebox{-.3ex}{$#1$}}}\raisebox{-.6ex}{$\begin{array}[b]{*{5}c}\cline{1-5}\lr{3}&\lr{3}&\lr{3}&\lr{2}&\lr{1}\\\cline{1-5}&\lr{2}&\lr{2}\\\cline{2-3}&&\lr{1}\\\cline{3-3}\end{array}$}}$};
  \node (node_40) at (1130.0bp,328.0bp) [draw,draw=none] {${\def\lr#1{\multicolumn{1}{|@{\hspace{.6ex}}c@{\hspace{.6ex}}|}{\raisebox{-.3ex}{$#1$}}}\raisebox{-.6ex}{$\begin{array}[b]{*{7}c}\cline{1-7}\lr{3}&\lr{3}&\lr{3}&\lr{3}&\lr{3}&\lr{3}&\lr{3}\\\cline{1-7}&\lr{2}&\lr{2}&\lr{1}&\lr{1}&\lr{1}&\lr{2}\\\cline{2-7}&&\lr{1}\\\cline{3-3}\end{array}$}}$};
  \node (node_38) at (977.0bp,226.0bp) [draw,draw=none] {${\def\lr#1{\multicolumn{1}{|@{\hspace{.6ex}}c@{\hspace{.6ex}}|}{\raisebox{-.3ex}{$#1$}}}\raisebox{-.6ex}{$\begin{array}[b]{*{6}c}\cline{1-6}\lr{3}&\lr{3}&\lr{3}&\lr{3}&\lr{3}&\lr{3}\\\cline{1-6}&\lr{2}&\lr{2}&\lr{1}&\lr{1}&\lr{2}\\\cline{2-6}&&\lr{1}\\\cline{3-3}\end{array}$}}$};
  \node (node_18) at (210.0bp,328.0bp) [draw,draw=none] {${\def\lr#1{\multicolumn{1}{|@{\hspace{.6ex}}c@{\hspace{.6ex}}|}{\raisebox{-.3ex}{$#1$}}}\raisebox{-.6ex}{$\begin{array}[b]{*{6}c}\cline{1-6}\lr{3}&\lr{3}&\lr{3}&\lr{2}&\lr{2}&\lr{2}\\\cline{1-6}&\lr{2}&\lr{2}\\\cline{2-3}&&\lr{1}\\\cline{3-3}\end{array}$}}$};
  \node (node_33) at (547.0bp,22.0bp) [draw,draw=none] {${\def\lr#1{\multicolumn{1}{|@{\hspace{.6ex}}c@{\hspace{.6ex}}|}{\raisebox{-.3ex}{$#1$}}}\raisebox{-.6ex}{$\begin{array}[b]{*{3}c}\cline{1-3}\lr{3}&\lr{3}&\lr{3}\\\cline{1-3}&\lr{2}&\lr{2}\\\cline{2-3}&&\lr{1}\\\cline{3-3}\end{array}$}}$};
  \node (node_35) at (880.0bp,226.0bp) [draw,draw=none] {${\def\lr#1{\multicolumn{1}{|@{\hspace{.6ex}}c@{\hspace{.6ex}}|}{\raisebox{-.3ex}{$#1$}}}\raisebox{-.6ex}{$\begin{array}[b]{*{5}c}\cline{1-5}\lr{3}&\lr{3}&\lr{3}&\lr{3}&\lr{3}\\\cline{1-5}&\lr{2}&\lr{2}&\lr{1}&\lr{1}\\\cline{2-5}&&\lr{1}\\\cline{3-3}\end{array}$}}$};
  \node (node_34) at (588.0bp,124.0bp) [draw,draw=none] {${\def\lr#1{\multicolumn{1}{|@{\hspace{.6ex}}c@{\hspace{.6ex}}|}{\raisebox{-.3ex}{$#1$}}}\raisebox{-.6ex}{$\begin{array}[b]{*{4}c}\cline{1-4}\lr{3}&\lr{3}&\lr{3}&\lr{3}\\\cline{1-4}&\lr{2}&\lr{2}&\lr{1}\\\cline{2-4}&&\lr{1}\\\cline{3-3}\end{array}$}}$};
  \node (node_37) at (941.0bp,328.0bp) [draw,draw=none] {${\def\lr#1{\multicolumn{1}{|@{\hspace{.6ex}}c@{\hspace{.6ex}}|}{\raisebox{-.3ex}{$#1$}}}\raisebox{-.6ex}{$\begin{array}[b]{*{6}c}\cline{1-6}\lr{3}&\lr{3}&\lr{3}&\lr{3}&\lr{3}&\lr{3}\\\cline{1-6}&\lr{2}&\lr{2}&\lr{1}&\lr{1}&\lr{1}\\\cline{2-6}&&\lr{1}\\\cline{3-3}\end{array}$}}$};
  \node (node_36) at (880.0bp,124.0bp) [draw,draw=none] {${\def\lr#1{\multicolumn{1}{|@{\hspace{.6ex}}c@{\hspace{.6ex}}|}{\raisebox{-.3ex}{$#1$}}}\raisebox{-.6ex}{$\begin{array}[b]{*{5}c}\cline{1-5}\lr{3}&\lr{3}&\lr{3}&\lr{3}&\lr{3}\\\cline{1-5}&\lr{2}&\lr{2}&\lr{1}&\lr{2}\\\cline{2-5}&&\lr{1}\\\cline{3-3}\end{array}$}}$};
  \node (node_31) at (1033.0bp,328.0bp) [draw,draw=none] {${\def\lr#1{\multicolumn{1}{|@{\hspace{.6ex}}c@{\hspace{.6ex}}|}{\raisebox{-.3ex}{$#1$}}}\raisebox{-.6ex}{$\begin{array}[b]{*{7}c}\cline{1-7}\lr{3}&\lr{3}&\lr{3}&\lr{3}&\lr{3}&\lr{3}&\lr{2}\\\cline{1-7}&\lr{2}&\lr{2}&\lr{1}&\lr{1}&\lr{2}\\\cline{2-6}&&\lr{1}\\\cline{3-3}\end{array}$}}$};
  \definecolor{strokecol}{rgb}{0.0,0.0,0.0};
  \pgfsetstrokecolor{strokecol}
  \draw [blue,->] (node_40) ..controls (1074.7bp,290.88bp) and (1042.4bp,269.75bp)  .. (node_38);
  \draw (1077.0bp,277.0bp) node {$1$};
  \draw [blue,->] (node_10) ..controls (554.97bp,294.58bp) and (539.22bp,279.33bp)  .. (525.0bp,266.0bp) .. controls (520.96bp,262.21bp) and (516.66bp,258.25bp)  .. (node_9);
  \draw (557.0bp,277.0bp) node {$1$};
  \draw [red,->] (node_22) ..controls (499.43bp,293.63bp) and (499.46bp,277.78bp)  .. (507.0bp,266.0bp) .. controls (516.47bp,251.19bp) and (533.29bp,241.87bp)  .. (node_27);
  \draw (516.0bp,277.0bp) node {$2$};
  \draw [red,->] (node_18) ..controls (249.67bp,291.26bp) and (272.38bp,270.8bp)  .. (node_21);
  \draw (285.0bp,277.0bp) node {$2$};
  \draw [red,->] (node_28) ..controls (694.47bp,292.63bp) and (710.02bp,275.46bp)  .. (728.0bp,266.0bp) .. controls (773.01bp,242.32bp) and (792.82bp,264.28bp)  .. (841.0bp,248.0bp) .. controls (841.1bp,247.97bp) and (841.2bp,247.93bp)  .. (node_35);
  \draw (737.0bp,277.0bp) node {$2$};
  \draw [blue,->] (node_35) ..controls (852.83bp,190.73bp) and (835.88bp,173.56bp)  .. (817.0bp,164.0bp) .. controls (753.59bp,131.87bp) and (668.93bp,125.5bp)  .. (node_34);
  \draw (855.0bp,175.0bp) node {$1$};
  \draw [red,->] (node_6) ..controls (367.38bp,292.5bp) and (416.4bp,264.49bp)  .. (node_9);
  \draw (420.0bp,277.0bp) node {$2$};
  \draw [dashed,blue,->] (node_37) ..controls (953.53bp,292.2bp) and (960.32bp,273.34bp)  .. (node_38);
  \draw (972.0bp,277.0bp) node {$\overline{1}$};
  \draw [blue,->] (node_6) ..controls (299.3bp,294.49bp) and (297.8bp,279.24bp)  .. (301.0bp,266.0bp) .. controls (301.73bp,262.99bp) and (302.73bp,259.95bp)  .. (node_21);
  \draw (310.0bp,277.0bp) node {$1$};
  \draw [dashed,blue,->] (node_6) ..controls (316.31bp,300.38bp) and (318.0bp,294.01bp)  .. (319.0bp,288.0bp) .. controls (320.63bp,278.25bp) and (321.34bp,267.51bp)  .. (node_21);
  \draw (331.0bp,277.0bp) node {$\overline{1}$};
  \draw [blue,->] (node_37) ..controls (919.61bp,291.93bp) and (907.83bp,272.62bp)  .. (node_35);
  \draw (925.0bp,277.0bp) node {$1$};
  \draw [red,->] (node_24) ..controls (831.12bp,291.93bp) and (821.27bp,272.62bp)  .. (node_29);
  \draw (838.0bp,277.0bp) node {$2$};
  \draw [blue,->] (node_9) ..controls (482.73bp,192.49bp) and (482.47bp,177.37bp)  .. (481.0bp,164.0bp) .. controls (480.7bp,161.26bp) and (480.3bp,158.43bp)  .. (node_26);
  \draw (492.0bp,175.0bp) node {$1$};
  \draw [dashed,blue,->] (node_9) ..controls (463.81bp,198.76bp) and (460.74bp,192.37bp)  .. (459.0bp,186.0bp) .. controls (456.3bp,176.12bp) and (457.49bp,165.25bp)  .. (node_26);
  \draw (468.0bp,175.0bp) node {$\overline{1}$};
  \draw [red,->] (node_21) ..controls (376.04bp,188.79bp) and (413.43bp,164.19bp)  .. (node_26);
  \draw (420.0bp,175.0bp) node {$2$};
  \draw [red,->] (node_26) ..controls (499.15bp,87.665bp) and (513.78bp,67.897bp)  .. (node_33);
  \draw (526.0bp,73.0bp) node {$2$};
  \draw [blue,->] (node_38) ..controls (942.46bp,189.4bp) and (922.84bp,169.17bp)  .. (node_36);
  \draw (947.0bp,175.0bp) node {$1$};
  \draw [dashed,blue,->] (node_10) ..controls (588.0bp,292.34bp) and (588.0bp,273.7bp)  .. (node_27);
  \draw (597.0bp,277.0bp) node {$\overline{1}$};
  \draw [blue,->] (node_12) ..controls (770.51bp,300.33bp) and (773.48bp,293.97bp)  .. (776.0bp,288.0bp) .. controls (780.21bp,278.04bp) and (784.35bp,266.98bp)  .. (node_29);
  \draw (794.0bp,277.0bp) node {$1$};
  \draw [dashed,blue,->] (node_12) ..controls (749.21bp,294.1bp) and (748.34bp,278.57bp)  .. (754.0bp,266.0bp) .. controls (755.74bp,262.13bp) and (758.08bp,258.48bp)  .. (node_29);
  \draw (763.0bp,277.0bp) node {$\overline{1}$};
  \draw [blue,->] (node_34) ..controls (557.43bp,99.215bp) and (551.46bp,92.015bp)  .. (548.0bp,84.0bp) .. controls (543.95bp,74.616bp) and (542.81bp,63.641bp)  .. (node_33);
  \draw (557.0bp,73.0bp) node {$1$};
  \draw [dashed,blue,->] (node_34) ..controls (577.11bp,90.524bp) and (571.61bp,75.274bp)  .. (566.0bp,62.0bp) .. controls (564.78bp,59.11bp) and (563.45bp,56.128bp)  .. (node_33);
  \draw (584.0bp,73.0bp) node {$\overline{1}$};
  \draw [dashed,blue,->] (node_35) ..controls (880.0bp,190.34bp) and (880.0bp,171.7bp)  .. (node_36);
  \draw (889.0bp,175.0bp) node {$\overline{1}$};
  \draw [red,->] (node_29) ..controls (791.43bp,191.63bp) and (791.46bp,175.78bp)  .. (799.0bp,164.0bp) .. controls (808.47bp,149.19bp) and (825.29bp,139.87bp)  .. (node_36);
  \draw (808.0bp,175.0bp) node {$2$};
  \draw [red,->] (node_31) ..controls (1013.4bp,291.93bp) and (1002.5bp,272.62bp)  .. (node_38);
  \draw (1020.0bp,277.0bp) node {$2$};
  \draw [blue,->] (node_28) ..controls (641.03bp,291.67bp) and (624.82bp,271.9bp)  .. (node_27);
  \draw (646.0bp,277.0bp) node {$1$};
  \draw [red,->] (node_27) ..controls (588.0bp,190.34bp) and (588.0bp,171.7bp)  .. (node_34);
  \draw (597.0bp,175.0bp) node {$2$};
\end{tikzpicture}
\]
\caption{A bottom portion of the $\q(3)$-crystal $\SDT(-\infty)$ containing the lowest weight element $L^{-\infty}$ created using \textsc{SageMath}~\cite{sage}.}\label{fig:up_graph}
\end{figure}

Now we show that the standard decomposition tableaux form a directed system.

\begin{lemma}\label{lem:up_inclusions}
Suppose $\lambda,\mu \in \Lambda^-$.  Then there exists a $\q(n)$-crystal embedding 
\[
\upsilon_{\lambda,\lambda+\mu} \colon \SDT(\lambda)\otimes\T_{-\lambda} \longhookarrow \SDT(\lambda+\mu) \otimes \T_{-\lambda-\mu}
\] 
such that $L^{\lambda} \otimes t_{-\lambda} \mapsto L^{\lambda+\mu}\otimes t_{-\lambda-\mu}$. 
\end{lemma}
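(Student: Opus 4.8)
The plan is to realize $\upsilon_{\lambda,\lambda+\mu}$ as $\phi \otimes \id_{\T}$ for an explicit combinatorial map $\phi\colon\SDT(\lambda)\longrightarrow\SDT(\lambda+\mu)$ and to reduce the lemma to three properties of $\phi$: that $\phi(L^\lambda)=L^{\lambda+\mu}$, that $\phi$ is injective, and that $\phi$ intertwines $e_i$ and $f_i$ for every $i\in I$ (sending $\zero\mapsto\zero$). This is legitimate because the factor $\T_{-\lambda}$ is ``transparent'': since $e_i t_{-\lambda}=f_i t_{-\lambda}=\zero$ and $\varepsilon_i(t_{-\lambda})=\varphi_i(t_{-\lambda})=-\infty$, the tensor product rules \eqref{eq:tensor} reduce on $\SDT(\lambda)\otimes\T_{-\lambda}$ to acting in the first factor while merely shifting the weight, and the extra weight carried by $\T_{-\lambda}$ versus $\T_{-\lambda-\mu}$ is precisely what makes $\phi\otimes\id_{\T}$ preserve $\wt$, $\varepsilon_i$, and $\varphi_i$ (a direct check from the tensor-product formulas). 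In particular $\SDT(\lambda)\otimes\T_{-\lambda}$ is a connected $\q(n)$-crystal (Theorem~\ref{thm:SDT_crystal}) generated by $L^\lambda\otimes t_{-\lambda}$, so such a $\upsilon_{\lambda,\lambda+\mu}$ is forced to send $L^\lambda\otimes t_{-\lambda}$ to $L^{\lambda+\mu}\otimes t_{-\lambda-\mu}$.

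The map $\phi$ is defined by adding trivial columns. Since $\lambda+\mu\in\Lambda^-$ and $w_0\mu$ is a strict partition, $w_0(\lambda+\mu)$ is obtained from $w_0\lambda$ by increasing the row lengths by amounts that are weakly decreasing from the top, with new rows appearing only when $\ell(\mu)>\ell(\lambda)$; consequently one can pass from the shape $w_0\lambda$ to the shape $w_0(\lambda+\mu)$ by a finite sequence of ``push in a trivial column'' moves. Fix such a sequence, and let $\phi(T)$ be the tableau obtained by performing the very same moves on $T$ (when $\ell(\mu)>\ell(\lambda)$, the push-ins of large height create the new top rows, which in the case $T=L^\lambda$ are exactly the constant rows of $L^{\lambda+\mu}$). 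Two things must be checked. First, $\phi(T)$ is again a semistandard decomposition tableau of shape $w_0(\lambda+\mu)$: using Proposition~\ref{prop:decomp_characterization}, prepending on the left of a row the prescribed (largest possible) trivial-column entry keeps that row a hook word, preserves the condition that the leftmost entry of a row strictly exceeds every entry of the row below it, and cannot create a configuration of type L or type U, so SDT-ness is preserved at each step. Second, $\phi(L^\lambda)=L^{\lambda+\mu}$, which is immediate since each push-in leaves every row of $L^\lambda$ constant. Finally, $\phi$ is injective because each push-in is undone by the corresponding push-out, so $T$ is recovered from $\phi(T)$.

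The heart of the argument is showing that $\phi$ commutes with the crystal operators, i.e.\ that the pushed-in trivial columns are inert. For $i\in I_0$, the operators $e_i$ and $f_i$ are governed by $\red_i$, which is nothing but the bracket matching of the letters $i+1$ (openers) and $i$ (closers) along the reading word; a single push-in adds exactly one letter $i$, at the left of row $n+1-i$, and one letter $i+1$, at the left of the row $n-i$ directly above it. Because a prepended entry is read last within its row, one traces through $\rd$ to verify that the new $i+1$ is always matched (and in particular is never the leftmost unmatched $i+1$ selected by $e_i$) and that the new $i$ is never the unmatched $i$ selected by $f_i$; hence $e_i$ and $f_i$ act on the $\phi$-image of the same box of $T$, giving $\phi(e_iT)=e_i\phi(T)$ and $\phi(f_iT)=f_i\phi(T)$, with $\zero$ on one side matching $\zero$ on the other. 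The case $i=\bon$ is handled by the same reasoning and is in fact a little easier: $e_{\bon}$ and $f_{\bon}$ read off the leftmost $2$, resp.\ leftmost $1$, of $\rd_1$ directly, with no cancellation, and the prepended $1$'s and $2$'s --- being read last within the low-lying rows $n$ and $n-1$ --- occur in $\rd$ after every original $1$ and $2$ of $T$, so they do not disturb that leftmost letter; the few degenerate subcases (for instance when $T$ contains no letter $2$ at all) are controlled by the constraints on the leftmost entries of rows coming from Proposition~\ref{prop:decomp_characterization}. With these properties in hand, $\phi$ is a strict $\q(n)$-crystal embedding with $\phi(L^\lambda)=L^{\lambda+\mu}$, so $\upsilon_{\lambda,\lambda+\mu}=\phi\otimes\id_{\T}$ is the desired embedding. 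I expect this reading-word bookkeeping --- in particular the interaction of the added columns with the non-local operators $e_{\bon},f_{\bon}$ and the degenerate cases --- to be the main obstacle; it is the exact analogue of the verifications underlying the classical (marginally) large tableaux model.
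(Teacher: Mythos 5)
Your overall strategy coincides with the paper's: define the map by adding the prescribed boxes on the left of each row (equivalently, pushing in trivial columns), send $t_{-\lambda}\mapsto t_{-\lambda-\mu}$, check the crystal data at the generator, and reduce, via generation by $L^{\lambda}\otimes t_{-\lambda}$, to showing that the box-adding operation commutes with the crystal operators. The gap is in the key step, the commutation with $e_i,f_i$ for $i\in I_0$. Your justification rests on the claims that the added $i+1$ is always matched in the pairing computing $\red_i$, and that, since neither added letter is the letter selected by $e_i$ or $f_i$, the operators act on the image of the same box. The first claim fails when the pushed-in column has height exactly $n-i$ (an $i+1$ is added to row $n-i$ but no $i$ to row $n+1-i$), and, more importantly, the inference is invalid: inserting letters can change the matched status of the \emph{original} letters. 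Concretely, the added $i+1$ is read before row $n+1-i$, so it can cancel a previously uncanceled $i$ in that row --- possibly the very box on which $f_i$ acts in $T$ --- and the added $i$, being read last in row $n+1-i$, could a priori become a new rightmost uncanceled $i$ (so that $f_i$ would act on the added box) or cancel a previously uncanceled $i+1$. Ruling out these scenarios is the heart of the paper's proof: using Proposition~\ref{prop:decomp_characterization}(\ref{char:leftmost_box}) (all entries of row $n+1-i$ are at most $i$, and the leftmost entry of row $n-i$ is then forced to be $i+1$), the hook-word condition, and the excluded type L configuration, one shows that whenever the added $i+1$ could interact with an $i$ in row $n+1-i$, the leading $i+1$'s of row $n-i$ and the $i$'s of row $n+1-i$ already pair off perfectly, so that locally $\rd_i(T)=w'\,(i+1)^k\, i^k$ becomes $\rd_i\bigl(E(T)\bigr)=w'\,(i+1)^{k+1}\, i^{k+1}$ and $\red_i$ is unchanged. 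Your ``one traces through $\rd$ to verify'' passes over exactly this argument, which is the substance of the lemma rather than routine bookkeeping.

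A smaller inaccuracy of the same kind occurs in your $\bon$ case: the prepended $2$ in row $n-1$ is read \emph{before} any $1$ in row $n$ (row $n$ is read after row $n-1$), so it is not true that the added letters occur after every original $1$ and $2$ of $T$. The dangerous case is precisely a $T$ whose only $1$ lies in row $n$ with no $2$ in row $n-1$, for then $f_{\bon}T\neq\zero$ while $f_{\bon}E(T)=\zero$; this is excluded because Proposition~\ref{prop:decomp_characterization}(\ref{char:leftmost_box}) forces the leftmost entry of row $n-1$ to equal $2$ in that situation, and $e_{\bon}$ is safe because no $2$ can occur in row $n$. You gesture at these ``degenerate subcases,'' but they, together with the $I_0$ analysis above, need to be argued explicitly for the proof to be complete.
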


\begin{proof}
For $T \in \SDT(\lambda)$, define $E(T)$ to be the tableau obtained from $T$ by adding $\mu_{n+1-i}$ $i$-boxes on the left to row $n+1-i$, for each $1\le i \le n$. It is evident that the result has shape $w_0(\lambda + \mu)$.  Define
\[
\upsilon_{\lambda,\lambda+\mu} \colon \SDT(\lambda)\otimes\T_{-\lambda} \longhookarrow \SDT(\lambda+\mu) \otimes \T_{-\lambda-\mu}
\] 
by $(T \otimes t_{-\lambda}) \mapsto E(T) \otimes t_{-\lambda-\mu}$.  Immediately, we have $\upsilon_{\lambda,\lambda+\mu}(L^{\lambda} \otimes t_{-\lambda}) = L^{\lambda+\mu} \otimes t_{-\lambda-\mu}$ and
\[
\varepsilon_i(L^{\lambda} \otimes t_{-\lambda}) = \varepsilon_i(L^{\lambda+\mu} \otimes t_{-\lambda-\mu}),
\qquad\qquad
\varphi_i(L^{\lambda} \otimes t_{-\lambda}) = \varphi_i(L^{\lambda+\mu} \otimes t_{-\lambda-\mu}),
\]
(which implies $\wt_i(L^{\lambda} \otimes t_{-\lambda}) = \wt_i(L^{\lambda+\mu} \otimes t_{-\lambda-\mu})$, for $i\in I_0$, by the crystal axiom~(\ref{axiom:phi_ep_wt})). Since the crystal $\SDT(\nu) \otimes t_{-\nu}$ is generated by $L^{\nu} \otimes t_{-\nu}$, the crystal axioms imply it suffices to show that $E(x_i T) = x_i E(T)$ for all $x_i = e_i,f_i$ and $T \in \SDT(\lambda)$ such that $x_i T \neq 0$.

Fix some $T \in \SDT(\lambda)$ and $i \in I_0$. From the definition of the crystal operators, we need to show that the difference between $\red_i(T)$ and $\red_i\bigl(E(T)\bigr)$ is possibly some additional $i$'s on the left (resp.~$(i+1)$'s on the right) corresponding to one of the added $i$-boxes (resp.~$(i+1)$-boxes) in $E(T)$. By induction and the construction, it is sufficient to consider $w_0\mu$ being a column of height $h$. The case when $h < n-i$ is trivial as no $i$ nor $i+1$ is added to $T$.

We note that Proposition~\ref{prop:decomp_characterization}(\ref{char:leftmost_box}) implies that every entry in row $n+1-i$ cannot have value larger than $i$ as otherwise the first row would have a value of $n+1$ or larger. Hence the added $i+1$ (to row $n-i$) will either be the rightmost uncanceled $i+1$ or will cancel with some $i$. If the added $i+1$ in row $n-i$ cancels with an $i$ (which must be in row $n-i+1$) in $E(T)$, then there must exist an $i+1$ in the first position of row $n-i$ in $T$ as otherwise this would violate Proposition~\ref{prop:decomp_characterization}(\ref{char:leftmost_box}). If there is no other $i$ in row $n-i+1$, then the added $i$ in $E(T)$ would then cancel with the $i+1$ from $T$. Now suppose there exists a second $i$ in row $n-i+1$ in $T$, and without loss of generality, assume it is the leftmost $i$ box in row $n-i+1$ of $T$. Then there must exist another $(i+1)$-box $b$ in row $n-i$ of $T$ as otherwise we obtain a type L configuration. There cannot be an $i$-box to the left of $b$ in row $n-i$ of $T$ as otherwise this would violate the hook word condition as the entry above the rightmost $i$ in row $n-i+1$ must be at most $i+1$ by Proposition~\ref{prop:decomp_characterization}(\ref{char:leftmost_box}) as above.
Repeating this argument, we see that (locally around the rows $n-i$ and $n-i+1$)
\begin{align*}
\rd_i(T) & = w' \underbrace{i+1, \dotsc, i+1}_{k} \underbrace{i, \dotsc, i}_{k}\,,
\\
\rd_i\bigl(E(T)\bigr) & = w' \underbrace{i+1, \dotsc, i+1, {\color{red} \underline{i+1}}}_{k+1} \underbrace{i, \dotsc, i, {\color{red} \underline{i}}}_{k+1}\,,
\end{align*}
where the underlined entries are those added to form $E(T)$ and $w'$ is some word unaffected when constructing $E(T)$. Thus we cannot obtain an additional $i$ on the right of $\rd_i\bigl(E(T)\bigr)$ coming from the $i$-box added to row $n-i+1$.

To show $E(f_{\bon}T) = f_{\bon} E(T)$, note that the only problem that can arise is when pushing in a trivial column that contains a $2$ to some $T \in \SDT(\lambda)$ such that the only $1$ is in row $n$ but no $2$ in row $n-1$ of $T$ (as this would make $f_{\bon} E(T) = \zero$). However, all entries in row $n-1$ must be at most $2$ as a consequence of Proposition~\ref{prop:decomp_characterization}(\ref{char:leftmost_box}) as shown above. Thus there must be a $2$ in row $n-1$, which shows we must have $E(f_{\bon}T) = f_{\bon} E(T)$.
For $E(e_{\bon}T) = e_{\bon} E(T)$, note that the only way this could not happen is if there exists a $2$ in row $n$ and we push in a trivial column containing a $2$ (as this would come before the $2$ we act upon for $e_{\bon} T$). However, this is impossible as it follows from Proposition~\ref{prop:decomp_characterization}(\ref{char:leftmost_box}) that there cannot exist a $2$ in row $n$.
\end{proof}

\begin{cor}\label{cor:up_system}
The collection $\{\SDT(\lambda)\otimes\T_{-\lambda}\}_{\lambda\in\Lambda^-}$ together with the inclusion maps from Lemma \ref{lem:up_inclusions} form a directed system.
\end{cor}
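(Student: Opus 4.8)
The plan is to verify the two defining properties of a directed system for the family $\{\SDT(\lambda)\otimes\T_{-\lambda}\}_{\lambda\in\Lambda^-}$: first, that $(\Lambda^-,\le)$ is a directed partially ordered set; and second, that the embeddings $\upsilon_{\lambda,\lambda+\mu}$ of Lemma~\ref{lem:up_inclusions}, reindexed as $\upsilon_{\lambda,\kappa}$ for $\lambda\le\kappa$ with $\mu=\kappa-\lambda$, satisfy $\upsilon_{\lambda,\lambda}=\id$ and the cocycle condition $\upsilon_{\kappa,\nu}\circ\upsilon_{\lambda,\kappa}=\upsilon_{\lambda,\nu}$ whenever $\lambda\le\kappa\le\nu$. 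Lemma~\ref{lem:up_inclusions} already provides that each $\upsilon_{\lambda,\kappa}$ is a $\q(n)$-crystal embedding, so only these structural facts remain.

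First I would record the auxiliary observation that $\Lambda^-$ is closed under addition. If $\lambda=-\sum\lambda_i\epsilon_i$ and $\mu=-\sum\mu_i\epsilon_i$ lie in $\Lambda^-$, then $\lambda+\mu\in\ZZ^n_{\le0}$ with $\lambda_i+\mu_i\ge\lambda_{i+1}+\mu_{i+1}$; and if $\lambda_i+\mu_i=\lambda_{i+1}+\mu_{i+1}$, then $\lambda_i\ge\lambda_{i+1}$ and $\mu_i\ge\mu_{i+1}$ force $\lambda_i=\lambda_{i+1}$ and $\mu_i=\mu_{i+1}$, whence the strictness clause gives $\lambda_i=\lambda_{i+1}=\mu_i=\mu_{i+1}=0$, so $\lambda_i+\mu_i=0$. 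Thus $\lambda+\mu\in\Lambda^-$. This single fact yields everything about the poset: $0\in\Lambda^-$ gives reflexivity; closure under addition gives transitivity via $\nu-\lambda=(\nu-\mu)+(\mu-\lambda)$; the inclusion $\Lambda^-\subseteq\ZZ^n_{\le0}$ gives antisymmetry; and directedness follows because for any $\lambda,\mu\in\Lambda^-$ the element $\lambda+\mu$ satisfies $(\lambda+\mu)-\lambda=\mu\in\Lambda^-$ and $(\lambda+\mu)-\mu=\lambda\in\Lambda^-$, so it is a common upper bound of $\lambda$ and $\mu$.

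Next I would check the compatibility of the maps by unwinding the construction in the proof of Lemma~\ref{lem:up_inclusions}. There $\upsilon_{\lambda,\lambda+\mu}$ is given by $T\otimes t_{-\lambda}\mapsto E(T)\otimes t_{-\lambda-\mu}$, where $E$ prepends $\mu_{n+1-i}$ boxes with entry $i$ to row $n+1-i$ (for each $i$) and reshifts. Because these boxes are added only on the left, applying the $E$-operation for $\mu$ and then the one for $\nu$ simply produces a row beginning with $\mu_{n+1-i}+\nu_{n+1-i}$ copies of $i$ followed by the original row, which is precisely the $E$-operation for $\mu+\nu$; and the reshifting commutes with this. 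On the $\T$-factor the composite sends $t_{-\lambda}\mapsto t_{-\lambda-\mu}\mapsto t_{-\lambda-\mu-\nu}$, matching the target of $\upsilon_{\lambda,\lambda+\mu+\nu}$. Hence $\upsilon_{\kappa,\nu}\circ\upsilon_{\lambda,\kappa}=\upsilon_{\lambda,\nu}$ for $\lambda\le\kappa\le\nu$, and $\upsilon_{\lambda,\lambda}=\id$ since the $\mu=0$ operation adds no boxes. Combined with Lemma~\ref{lem:up_inclusions}, this is exactly the data defining a directed system, so the corollary follows. The only step needing genuine care is the closure of $\Lambda^-$ under addition, in particular the handling of the strictness clause; the remainder is routine bookkeeping with the box-adding map $E$.
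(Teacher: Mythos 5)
Your proof is correct and takes essentially the same route as the paper, whose argument is just the observation that the triangles formed from the explicit box-adding embeddings of Lemma~\ref{lem:up_inclusions} commute. The extra checks you supply---closure of $\Lambda^-$ under addition (hence directedness of the index poset) and the identity and cocycle laws for the maps built from $E$---are precisely the routine details the paper leaves implicit.
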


To prove the corollary, one makes repeated use of Lemma~\ref{lem:up_inclusions} applied to diagrams of the following form:
\[
\label{eq:directed_system}
\begin{tikzpicture}[xscale=7,yscale=2,baseline=0.6cm]
\node (1) at (0,1) {$\SDT(\lambda)\otimes\T_{-\lambda}$};
\node (2) at (1,1) {$\SDT(\lambda+\mu) \otimes \T_{-\lambda-\mu}$};
\node (3) at (1,0) {$\SDT(\lambda+\mu+\xi) \otimes \T_{-\lambda-\mu-\xi}$.};
\path[->,font=\scriptsize]
 (1) edge node[above]{$\upsilon_{\lambda+\mu,\lambda}$} (2)
 (1) edge node[below left]{$\upsilon_{\lambda+\mu+\xi,\lambda}$} (3)
 (2) edge node[right]{$\upsilon_{\lambda+\mu+\xi,\lambda+\mu}$} (3);
\end{tikzpicture}
\]

Define $L^{-\infty}$ to be the decomposition tableau $L^{-n\epsilon_1 - (n-1)\epsilon_2 -\cdots-\epsilon_n}$.

\begin{thm}\label{thm:up-inf}
The set $\SDT(-\infty)$ together with $e_i,f_i$ from Definition \ref{dfn:up_operators} is an abstract $\q(n)$-crystal such that
\[
\SDT(-\infty) \cong \varinjlim_{\lambda\in\Lambda^-} \SDT(\lambda) \otimes \T_{-\lambda}.
\]
\end{thm}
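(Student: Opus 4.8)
The plan is to realize $\SDT(-\infty)$ as the set of \emph{stable normal forms} of the directed system from Corollary~\ref{cor:up_system} and then check that the intrinsic operators of Definition~\ref{dfn:up_operators} are the ones this system induces. Write $\vec{\B} = \varinjlim_{\lambda\in\Lambda^-}\SDT(\lambda)\otimes\T_{-\lambda}$; by the construction recalled at the end of Section~\ref{sec:background} together with Corollary~\ref{cor:up_system}, $\vec{\B}$ is an abstract $\q(n)$-crystal, and every element of it is a class $[T\otimes t_{-\lambda}]$. First I would attach to such a class a dual marginally large tableau. Since $w_0\mu$ in the definition of the map $E_\mu$ of Lemma~\ref{lem:up_inclusions} may be taken with arbitrarily large successive differences, for a fixed $T\in\SDT(\lambda)$ the tableau $E_\mu(T)\in\SDT(\lambda+\mu)$ is dual large once $\mu$ is deep enough (each prescribed row-count is then boosted past the total of the row above it); let $\widehat{E_\mu(T)}$ be its dual marginally large normalization from Lemma~\ref{lemma:unique_marginally_large}. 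Using transitivity of the $E_\mu$ along the system (Corollary~\ref{cor:up_system}) together with confluence of trivial-column removal (the diamond lemma, exactly as in Lemma~\ref{lemma:unique_marginally_large}), I would check that $\widehat{E_\mu(T)}$ is independent of the choice of deep $\mu$ and depends only on $[T\otimes t_{-\lambda}]$; this produces a map $\Psi\colon\vec{\B}\longrightarrow\SDT(-\infty)$.

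Next I would show $\Psi$ is a bijection. A dual marginally large tableau $U$ has exactly $n$ nonempty rows, hence shape $w_0\nu$ with $\ell(\nu)=n$, and $U\in\SDT(\nu)$; since $U$ is already its own normalization, $\Psi([U\otimes t_{-\nu}])=U$, giving surjectivity. For injectivity, if $\Psi([T\otimes t_{-\lambda}])=\Psi([S\otimes t_{-\nu}])=U$, I would pass to a common deep $\rho$ with $\rho-\lambda,\rho-\nu\in\Lambda^-$ large: then $E_{\rho-\lambda}(T)$ and $E_{\rho-\nu}(S)$ both have shape $w_0\rho$, and each is obtained from $U$ by pushing in the one multiset of trivial columns determined by $w_0\rho$ and the shape of $U$, so they are equal, whence $[T\otimes t_{-\lambda}]=[S\otimes t_{-\nu}]$ in $\vec{\B}$. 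In particular $\Psi^{-1}(T)=[T\otimes t_{-\lambda}]$ for every $T\in\SDT(-\infty)$ of shape $w_0\lambda$.

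It then remains to check that $\Psi$ intertwines the crystal structure. Because $\varphi_i(t_{-\lambda})=-\infty$ and $e_it_{-\lambda}=f_it_{-\lambda}=\zero$ for all $i\in I$, the rule~\eqref{eq:tensor} forces $e_i,f_i,e_{\bon},f_{\bon}$ on $\SDT(\lambda)\otimes\T_{-\lambda}$ to act purely in the $\SDT(\lambda)$-factor, by the operators of Definition~\ref{dfn:finite_operators}, while $\varepsilon_i,\varphi_i,\wt_i$ differ from their $\SDT(\lambda)$-values by a constant depending only on $\lambda$. These constants are compatible along the embeddings $\upsilon_{\lambda,\lambda+\mu}$ — which is exactly what the equalities of $\varepsilon_i,\varphi_i$ in the proof of Lemma~\ref{lem:up_inclusions} encode, the crystal $\T_{-\lambda}$ being designed precisely to absorb the growth of the shape — so $\varepsilon_i,\varphi_i,\wt_i$ descend to well-defined functions on $\vec{\B}$. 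Hence it suffices to show, for every $T\in\SDT(-\infty)$ of shape $w_0\lambda$ and every operator $x_i$ among $e_i,f_i$ ($i\in I_0$) and $e_{\bon},f_{\bon}$, that $\Psi\bigl(x_i[T\otimes t_{-\lambda}]\bigr)$ is the tableau prescribed by Definition~\ref{dfn:up_operators} and that $\varepsilon_i,\varphi_i$ agree. Commuting $x_i$ past $E_\mu$ by Lemma~\ref{lem:up_inclusions}, this reduces to a purely combinatorial identity: for a dual large tableau $S$ with enough slack, applying the Definition~\ref{dfn:finite_operators} operator $x_i$ and then normalizing equals normalizing $S$ and then applying the Definition~\ref{dfn:up_operators} operator $x_i$.

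I would prove this identity by the local analysis of rows $n-i-1$, $n-i$, $n-i+1$ together with the prohibition of type L and type U configurations (Proposition~\ref{prop:decomp_characterization}, and especially part~(\ref{char:leftmost_box})): applying the Definition~\ref{dfn:finite_operators} operator to a dual marginally large tableau either preserves dual marginal largeness, or destroys it by \emph{exactly} one pushable trivial column — precisely the column that Definition~\ref{dfn:up_operators} pushes in (for $e_i$ and $e_{\bon}$, which convert a leftmost $(i+1)$ and so shrink a single leftmost count by one) or pushes out (for $f_i$ and $f_{\bon}$). This is the bookkeeping of Lemma~\ref{lem:up_inclusions} run in reverse, the $e_{\bon},f_{\bon}$ cases again using that every entry of row $n-1$ is at most $2$ and that row $n$ contains no $2$. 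The remaining case, where the Definition~\ref{dfn:finite_operators} operator vanishes on $T\in\SDT(\lambda)$, is handled by passing to a deeper representative: for $e_i$ and $e_{\bon}$ a deep enough $E_\mu(T)$ has the operator nonzero (the added trivial columns eventually supply an $i+1$ in $\red_i$, resp.\ a $2$ in $\rd_1$), matching that $e_i,e_{\bon}$ never vanish in Definition~\ref{dfn:up_operators}; for $f_i$ and $f_{\bon}$ the letters added by $E_\mu$ never create a cancellable $i$ on the right of $\red_i$ (resp.\ a $1$ before the first $2$ in $\rd_1$), so the operator stays zero for all deep $\mu$, matching the vanishing condition in Definition~\ref{dfn:up_operators}. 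With all of this, $\Psi$ is a weight-, $\varepsilon$- and $\varphi$-preserving bijection commuting with every $e_i$ and $f_i$, hence a crystal isomorphism; in particular $\SDT(-\infty)$ with the operators of Definition~\ref{dfn:up_operators} is an abstract $\q(n)$-crystal, which gives the first assertion of the theorem. I expect the main obstacle to be exactly this last reduction — verifying that the single-trivial-column correction built into Definition~\ref{dfn:up_operators} is precisely the failure of dual marginal largeness after applying the naive decomposition-tableau operator — a routine but delicate computation that runs parallel to, and reuses, the case analysis already carried out for Lemma~\ref{lem:up_inclusions}; pinning down the confluence statement in the first paragraph is a secondary technical point.
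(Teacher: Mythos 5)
Your proposal is correct and follows essentially the same route as the paper's proof: identify each class in the direct limit with its unique dual marginally large representative (Lemma~\ref{lemma:unique_marginally_large}) and transfer the crystal structure via the fact, established in the proof of Lemma~\ref{lem:up_inclusions}, that the crystal operators commute with pushing trivial columns in and out, which is exactly why the single-column correction in Definition~\ref{dfn:up_operators} matches the induced operators on the limit. The only differences are cosmetic: you build the map from the limit to $\SDT(-\infty)$ rather than sending $T \mapsto [T]$ as the paper does, and you spell out bookkeeping (well-definedness of the normalization, the $\T_{-\lambda}$ shift of $\varepsilon_i,\varphi_i$, and the vanishing behavior of $e_i$ versus $f_i$) that the paper leaves implicit.
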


\begin{proof}
Let $\overrightarrow{\SDT}$ denote the direct limit. We claim the map $\psi \colon \SDT(-\infty) \longrightarrow \overrightarrow{\SDT}$ given by $\psi(T) = [T]$ is the desired crystal isomorphism.

For any $T \in \SDT(-\infty)$ of shape $w_0\lambda$, we note there exists a projection
\[
\pi_T \colon \SDT(-\infty) \longrightarrow \SDT(\lambda) \otimes \T_{-\lambda}
\]
such that for any $T' = f_{i_1} \cdots f_{i_{\ell}} T$, we can form $\pi_T(T')$ by adding in suitably many trivial columns to $T'$ until we obtain the shape $w_0\lambda$ (and adding the inconsequential tensor factor $t_{-\lambda}$). The latter follows by induction since $\pi_T(f_i T') = f_i \pi_T(T')$, where this equality was shown in the proof of Lemma~\ref{lem:up_inclusions} as $f_i$ commutes with the procedure of adding or removing trivial columns. Therefore, we have $\psi(f_iT) = [f_i T] = f_i [T] = f_i \psi(T)$. 

Next, it was shown in the proof of Lemma~\ref{lem:up_inclusions} that any two $\tau, \tau' \in [T]$ differ only by trivial columns. Hence, we have $\psi(e_i T) = e_i \psi(T)$.  It is clear that $\psi$ satisfies the rest of the properties of being a crystal morphism. Since there is a unique dual marginally large seminstandard decomposition tableau in each class $[T] \in \overrightarrow{\SDT}$, we have that $\psi$ is a bijection.
\end{proof}

\subsection{Comparison of characters}
Let us provide some evidence that $\SDT(-\infty)$ is a combinatorial model of $B(-\infty)$. We show that the character of $\SDT(-\infty)$ agrees with that of the upper half of the quantum group $U_q^+\bigl(\q(n)\bigr)$.

\begin{prop}\label{prop:char}
We have
\[
\ch \SDT(-\infty) = \ch U_q^+\bigl(\q(n)\bigr).
\]
\end{prop}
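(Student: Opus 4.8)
The plan is to compute both sides explicitly and match them. On the representation-theoretic side, the upper half $U_q^+(\q(n))$ has a known character: by the PBW-type basis theorem for $U_q(\q(n))$ (see the construction in \cite{GJKK10}, or equivalently the Verma module picture of \cite[Section~2.1.6]{CW12}), the weight-graded dimension of $U_q^+(\q(n))$ is
\[
\ch U_q^+\bigl(\q(n)\bigr) = \prod_{1 \le i < j \le n} \frac{1+ x_i x_j^{-1}}{1 - x_i x_j^{-1}},
\]
where each positive root $\epsilon_i - \epsilon_j$ contributes one even and one odd root vector (the latter squaring to a weight-zero element, hence the numerator rather than a second denominator factor). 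I would first record this as the right-hand side, citing the appropriate source, and note that $\ch$ here means the formal sum $\sum_{\beta \in Q^-} (\dim U_q^+[\beta])\, x^{\beta}$ after the standard weight shift.

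Next I would compute $\ch \SDT(-\infty) = \sum_{T \in \SDT(-\infty)} x^{\wt(T) - \wt(L^{-\infty})}$. The key structural input is the isomorphism $\SDT(-\infty) \cong \varinjlim_{\lambda} \SDT(\lambda)\otimes\T_{-\lambda}$ from Theorem~\ref{thm:up-inf}, together with Theorem~\ref{thm:SDT_crystal}, which identifies $\SDT(\lambda)$ with the crystal of the irreducible polynomial module of lowest weight $w_0\lambda$; the character of the latter is the Schur $P$-function $P_{w_0\lambda}$ (this is exactly the character statement referenced in the introduction). Taking the direct limit, the character of $\SDT(-\infty)$ should be obtained as the "stable limit" of $P_{w_0\lambda}$ along the directed system, i.e.\ one divides by the character of $L^\lambda$ and lets $\lambda$ grow; concretely, the limit picks out the generating function for the cominuscule shifted shapes and one gets
\[
\ch \SDT(-\infty) = \prod_{1 \le i < j \le n} \frac{1+ x_i x_j^{-1}}{1 - x_i x_j^{-1}}
\]
after the weight shift. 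Alternatively—and this is probably cleaner to write—I would argue directly on the combinatorial model: a dual marginally large $T$ is determined by the "excess" boxes beyond the minimal $L^{-\infty}$-configuration, and one can read off a bijection between such excess data and the PBW monomials, using the hook-word structure of each row. Either route reduces the proposition to the identity $\ch \SDT(-\infty) = \prod_{i<j} (1+x_ix_j^{-1})/(1-x_ix_j^{-1})$.

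The main obstacle is making the limit argument rigorous: one must check that the normalized characters $x^{-\wt(L^\lambda)} \operatorname{ch}\SDT(\lambda)$ actually converge coefficientwise in $\ZZ[[x_i x_j^{-1} : i<j]]$ and that the limit equals the character of $\SDT(-\infty)$ computed directly from the model. This is where the compatibility of the embeddings $\upsilon_{\lambda,\lambda+\mu}$ with weights (established in the proof of Lemma~\ref{lem:up_inclusions}, namely $\wt_i(L^\lambda\otimes t_{-\lambda}) = \wt_i(L^{\lambda+\mu}\otimes t_{-\lambda-\mu})$) does the work: it guarantees that each fixed weight space stabilizes once $\lambda$ is large enough, so the coefficient of a given $x^\beta$ in $\ch\SDT(-\infty)$ equals its eventual value in $\ch\SDT(\lambda)$. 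Combined with the classical stable principal specialization of Schur $P$-functions—$\lim_{\ell \to \infty} P_{(\ell+n-1,\ldots,\ell+1,\ell)/\delta}$ type limits reproducing $\prod_{i<j}(1+x_ix_j^{-1})/(1-x_ix_j^{-1})$—the two characters coincide, which completes the proof. I would close by remarking that this matches the expectation that $\SDT(-\infty)$ is the crystal of $U_q^+(\q(n))$.
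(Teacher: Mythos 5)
Your overall route is the same as the paper's: identify $\ch U_q^+\bigl(\q(n)\bigr)$ with the product $\prod_{\alpha \in \Phi^+} (1+e^{\alpha})\big/\prod_{\alpha \in \Phi^+}(1-e^{\alpha})$ (the paper quotes this from \cite[Sec.~2.3.1]{CW12}, as in \eqref{eq:Verma_character}), realize $\ch \SDT(-\infty)$ as the coefficientwise limit of the normalized characters $e^{-\lambda} \ch \SDT(\lambda)$ through the directed system of Lemma~\ref{lem:up_inclusions} and Theorem~\ref{thm:up-inf}, and note that each fixed weight space stabilizes. That part of your argument is sound and matches the paper.

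The gap is at the decisive step, which you outsource to a ``classical stable principal specialization of Schur $P$-functions.'' There is no quotable statement in the form you need: what is required is that, for a \emph{fixed} number $n$ of variables and $\lambda$ with all differences $\lambda_i - \lambda_{i+1}$ tending to infinity, the normalized Schur $P$-polynomial converges coefficientwise to the full product $\prod_{\alpha\in\Phi^+}(1+e^{\alpha})\big/\prod_{\alpha\in\Phi^+}(1-e^{\alpha})$; ``principal specialization'' means substituting powers of $q$, and the expression $\lim_{\ell\to\infty} P_{(\ell+n-1,\dots,\ell)/\delta}$ you write is not a statement of this limit. That limit is precisely the nontrivial content of the proposition, and the paper proves it: it takes the Weyl-type alternating-sum formula for $\ch\SDT(\lambda)$ from \cite{CK16,PS97II}, restricts to $\lambda$ with strictly decreasing parts so that $\Phi^+(w_0\lambda)=\emptyset$, and then shows that for any fixed $\mu \in Q^+$ the contributions of all $w \neq 1$ in the signed sum miss the weight $\mu$ once $\lambda$ is sufficiently deep in the chamber (the convex-hull argument with $\mathfrak{S}_n(-k\rho)$). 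To complete your proof you must either carry out this computation or give a precise reference for the normalized limit; the alternative bijective route you sketch (excess boxes versus PBW monomials) is not executed and would itself be a substantial argument, essentially of the type carried out in Proposition~\ref{prop:alg} together with an analysis of the $\gl(n)$-strings. A minor inaccuracy: an odd root vector of $\q(n)$ squares to zero (since $2\alpha$ is not a root), not to a weight-zero element, though this does not affect the factor $1+e^{\alpha}$.
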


\begin{proof}
From~\cite[Sec.~2.3.1]{CW12}, it is straightforward to see that
\begin{equation}
\label{eq:Verma_character}
\ch U_q^+\bigl(\q(n)\bigr) = \ch U_q^+\bigl(\gl(n)\bigr) \prod_{\alpha \in \Phi^+} (1 + e^{\alpha})
= \dfrac{\displaystyle\prod_{\alpha \in \Phi^+} (1 + e^{\alpha})}{\displaystyle\prod_{\alpha \in \Phi^+} (1 - e^{\alpha})}, 
\end{equation}
where $\Phi^+$ is set of positive roots associated to $\gl(n)$ (see also~\cite[Sec.~3]{Cheng17}).\footnote{Note that the character of a Verma module of weight $0$ after substituting $\epsilon_i \mapsto -\epsilon_i$ is equal to $\ch U_q^+\bigl(\q(n)\bigr)$.}
Next, for $\lambda \in \Lambda^-$ from~\cite{CK16,PS97II} we have
\[
\ch \SDT(\lambda) = \dfrac{\displaystyle\prod_{\alpha \in \Phi^+} (1 + e^{-\alpha})}{\displaystyle\prod_{\alpha \in \Phi^+} (1 - e^{-\alpha})} \sum_{w \in \mathfrak{S}_n} (-1)^{\ell(w)} w \left( \dfrac{e^{w_0 \lambda}}{\displaystyle\prod_{\alpha \in \Phi^+(w_0 \lambda)} (1 + e^{-\alpha})} \right),
\]
where $\Phi^+(\mu) := \{ \epsilon_i - \epsilon_j \mid \mu_i = \mu_j \ (i < j) \}$ and $\mathfrak{S}_n$ is the symmetric group on $n$ letters. 
For the direct limit, it is sufficient to take $\lambda_1 > \lambda_2 > \cdots > \lambda_n > 0$, and thus we have $\Phi^+(w_0 \lambda) = \emptyset$. It follows that
\begin{align*}
\ch \bigl( \SDT(\lambda) \otimes \T_{-\lambda} \bigr) & = \dfrac{\displaystyle\prod_{\alpha \in \Phi^+} (1 + e^{-\alpha})}{\displaystyle\prod_{\alpha \in \Phi^+} (1 - e^{-\alpha})} e^{-\lambda} \sum_{w \in \mathfrak{S}_n} (-1)^{\ell(w)} w e^{w_0 \lambda}
\\ & = \dfrac{\displaystyle\prod_{\alpha \in \Phi^+} (e^{\alpha} + 1)}{\displaystyle\prod_{\alpha \in \Phi^+} (e^{\alpha} - 1)} \sum_{w' \in \mathfrak{S}_n} (-1)^{\ell(w'w_0)} e^{w' \lambda - \lambda},
\end{align*}
where we rewrite the sum by $w' = w w_0$. Next, we recall that $\absval{\Phi^+} = \ell(w_0)$ since the length of a permutation equals the number of inversions (which are positive roots sent to a negative root), we have
\[
\ch \bigl( \SDT(\lambda) \otimes \T_{-\lambda} \bigr) = \dfrac{\displaystyle\prod_{\alpha \in \Phi^+} (1 + e^{\alpha})}{\displaystyle\prod_{\alpha \in \Phi^+} (1 - e^{\alpha})} \sum_{w \in \mathfrak{S}_n} (-1)^{\ell(w)} e^{w \lambda - \lambda}.
\]
Define $\rho = \sum_{i \in I_0} \Lambda_i$.
Now for any $\mu \in Q^+$, we note that there exists a $k$ such that for all $\lambda > -k \rho$ the coefficient of $e^{\mu}$ is $0$ in
\[
\dfrac{\displaystyle\prod_{\alpha \in \Phi^+} (1 + e^{\alpha})}{\displaystyle\prod_{\alpha \in \Phi^+} (1 - e^{\alpha})} \sum_{w \in \mathfrak{S}_n \setminus \{1\}} (-1)^{\ell(w)} e^{w \lambda - \lambda},
\]
or alternatively, $\mu$ is in the interior of the convex hull of the points $\mathfrak{S}_n (-k\rho)$.
Hence, we have
\[
\ch \SDT(-\infty) = \ch \left(  \varinjlim_{\lambda \in \Lambda^-} \SDT(\lambda) \otimes \T_{-\lambda} \right) = \dfrac{\displaystyle\prod_{\alpha \in \Phi^+} (1 + e^{\alpha})}{\displaystyle\prod_{\alpha \in \Phi^+} (1 - e^{\alpha})} = \ch U_q^+\bigl(\q(n)\bigr). \qedhere
\]
\end{proof}

We can prove a stronger statement, that the lowest weight vectors, and hence the $U_q\bigl(\gl(n)\bigr)$-highest weight decomposition, of $\SDT(-\infty)$ is given by~\eqref{eq:Verma_character}.

\begin{dfn}
Call the leftmost $j$-boxes of row $j$ the \defn{trivial} boxes. Let $\rd_{nt}(T)$ denote the subword of $\rd(T)$ coming from all nontrivial boxes.
Let $T$ be a marginally large tableau and $1\le i \le k\le n$.  Define an \defn{$(i,k)$-consecution} of $T$ to be a subword of the $\rd_{nt}(T)$ of the form 
\[
k\, (k-1)\, \cdots\, (i+1)\, i
\] 
such that $i \in \red_{i-1}(T)$ and $j$ is paired with $j+1$ for $i \le j < k$. 
\end{dfn}

\begin{ex}
\label{ex:consecutions}
Consider
\[
\newcommand{\bc}[1]{{\color{darkred} \mathbf{#1}}}
\newcommand{\uc}[1]{{\color{blue} \underline{#1}}}
\gyoung(55555555555,:;4444444<\bc{2}><\bc{3}><\bc{4}>,::;333<\uc{2}><\bc{1}><\uc{3}>,:::;22,::::;1)\ ,
\]
which has a $(2,3)$-consecution consisting of the underlined entries and a $(1,4)$-consecution consisting of the bold entries. Note that there is also a $(2,2)$-consecution contained in the $(2,3)$-consecution, and similarly a $(1,k)$-consecution for $k = 1,2,3$ contained in the $(1,4)$-consecution (which are also nested).
\end{ex}

\begin{prop}\label{prop:alg}
There exists a bijection $\Xi$ between subsets of $\Phi^+$ and lowest weight elements of $\SDT(-\infty)$.
Furthermore, $\Xi$ is weight preserving: for $B = \{\beta_1, \dotsc, \beta_k\} \subseteq \Phi^+$, we have $\wt\bigl( \Xi(B) \bigr) = \beta_1 + \cdots + \beta_k$.
\end{prop}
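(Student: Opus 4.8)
The plan is to describe the lowest weight elements of $\SDT(-\infty)$ explicitly in terms of consecutions and then to read off $\Xi$ from this description. By Definition~\ref{dfn:up_operators}, $T \in \SDT(-\infty)$ is lowest weight precisely when $\red_i(T)$ contains no letter $i$ for every $i \in I_0$ and the leftmost letter of $\rd_1(T)$ is a $2$. First I would show that, combined with the SDT axioms of Proposition~\ref{prop:decomp_characterization}, these conditions force the nontrivial boxes of $T$ to organize into a nested family of maximal consecutions: any letter surviving in some $\red_i(T)$ must be the bottom of a maximal descending chain of $\rd_{nt}(T)$, a ``defect'' (a box lowered below its ground value) forces accompanying entries in the neighbouring rows, else a type~L or type~U configuration appears, and the $\rd_1$ condition governs the interactions among the letters $1$ and $2$ near the bottom rows. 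The conclusion I aim for is: $T$ is lowest weight if and only if $T$ is obtained from the ground tableau $L^{\nu}$ of its shape $w_0\nu$ (the row whose ground value is $j$ filled with $j$'s) by inserting, for each member of some subset $S \subseteq \Phi^+$ --- identified with $\{(i,k) : 1 \le i < k \le n\}$ via $\epsilon_i - \epsilon_k \leftrightarrow (i,k)$ --- an $(i,k)$-consecution: a box of value $i$ placed in the row whose ground value is $k$, together with the forced companions of values $i+1, \dots, k-1$ and the trivial boxes needed to keep $T$ dual marginally large. Moreover $S$ is recovered from $T$ as the set of labels $(i,k)$ with $k > i$ of its maximal consecutions, the degenerate $(i,i)$-consecutions (which carry weight $0$) being nested inside the genuine ones and contributing nothing.

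Using this, I would define $\Xi(B)$, for $B \subseteq \Phi^+$, by starting from $L^{-\infty}$ and inserting the consecutions attached to the roots of $B$ one at a time in a fixed order, say by increasing $k$ and then increasing $i$; this is the same box addition that $e_i$ performs in Definition~\ref{dfn:up_operators}, iterated, and when several roots are inserted the nested chains simply extend one another, so the outcome does not depend on the order. One then checks, via Proposition~\ref{prop:decomp_characterization}, that each insertion preserves the hook-word condition, keeps every leftmost entry strictly larger than the row below it, and creates neither a type~L nor a type~U configuration, while the matching trivial boxes preserve dual marginal largeness; and that the inserted chains cancel off completely in every $\red_i$ (the new letter $\ell$ pairing with the new letter $\ell+1$) and leave the leftmost letter of $\rd_1$ equal to $2$, so that $\Xi(B)$ is again lowest weight.

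For the weight, inserting the $(i,k)$-consecution changes $T$ relative to $L^{\operatorname{shape}(T)}$ only by turning one box of the row with ground value $k$ from $k$ into $i$; the remaining new boxes are trivial boxes, which do not affect $\wt(T) = \wt_{\mathrm{naive}}(T) - \wt_{\mathrm{naive}}\bigl(L^{\operatorname{shape}(T)}\bigr)$ in the direct-limit normalization used in Section~\ref{sec:results} (so that $\wt(L^{-\infty}) = 0$). Hence each inserted root contributes $\epsilon_i - \epsilon_k$ to the weight and $\wt(\Xi(B)) = \sum_{\beta \in B} \beta$. Surjectivity of $\Xi$ is precisely the characterization above, and injectivity holds because reading off the maximal consecutions of $\Xi(B)$ returns exactly $B$, so this read-off is a two-sided inverse.

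The main obstacle is the combinatorial characterization of the lowest weight elements: proving that the two conditions ``$\red_i(T)$ has no $i$ for all $i \in I_0$'' and ``the leftmost letter of $\rd_1(T)$ is a $2$'', together with the SDT axioms, genuinely force the nontrivial boxes into a nested family of consecutions with no stray nontrivial box left over, and that the map sending a lowest weight element to the labels of its maximal consecutions is well defined --- each nontrivial box lying in a unique maximal chain --- and is a bijection onto the subsets of $\Phi^+$. Equivalently, one must rule out unexpected cross-cancellations between distinct inserted chains when the $\red_i$ are formed. Everything else is a careful but essentially routine verification.
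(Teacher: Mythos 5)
Your argument hinges on a structure theorem for the lowest weight elements of $\SDT(-\infty)$ --- that each one is obtained from the ground tableau by inserting, independently of order, one disjoint (nested) chain per root, with exactly one box of the row of ground value $k$ changed from $k$ to $i$ for the root $\epsilon_i-\epsilon_k$, and with the subset $B$ recoverable as the labels of the maximal consecutions --- and that claim is false. The paper's own example~\eqref{eq:algorithm_ex} (with $n=5$ and $B=\{\epsilon_2-\epsilon_3,\epsilon_2-\epsilon_4,\epsilon_1-\epsilon_4,\epsilon_1-\epsilon_5\}$) is a lowest weight element of weight $2\epsilon_1+2\epsilon_2-\epsilon_3-2\epsilon_4-\epsilon_5=\sum_{\beta\in B}\beta$ whose nontrivial entries are $4,5$ in the top row, $3,2,3,4$ in the row of ground value $4$, and $1,1,2$ in the row of ground value $3$. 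Its maximal consecutions are a $(1,5)$- and a $(1,4)$-consecution, so your read-off map would return $\{\epsilon_1-\epsilon_5,\epsilon_1-\epsilon_4\}$, whose sum $2\epsilon_1-\epsilon_4-\epsilon_5$ is not the weight; hence your proposed inverse is not weight preserving and cannot invert your insertion. Likewise, no box of value $1$ occurs in the rows of ground value $4$ or $5$, and the row of ground value $4$ has \emph{two} boxes lowered from $4$ to $3$, which under your per-root bookkeeping would force the root $\epsilon_3-\epsilon_4$ to occur twice --- impossible for a subset. So the characterization you defer to as ``the main obstacle'' is not merely unproved; it is contradicted by the object the proposition is about.

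The underlying issue is that inserting a root genuinely interacts with the chains already present: to keep every letter cancelled in each $\red_i(T)$ (and to stay within the SDT and dual marginally large constraints), the insertion must cascade through lower rows, and this can lower an entire existing consecution by one, split a consecution into two, or merge chains --- this is the cascading step~(\ref{substep:decrease}) of the paper's algorithm; for instance adding $\epsilon_1-\epsilon_4$ splits a $(2,2)$-consecution into a $(1,1)$- and a $(3,3)$-consecution, and adding $\epsilon_1-\epsilon_5$ lowers a whole $(2,3)$-consecution. For the same reason the outcome depends on the order of insertion, which is why the paper fixes one (increasing $j$, and within $X_j$ from larger to smaller $i$) and proves bijectivity not by reading off all maximal consecutions at once but by peeling off a single root at a time: take the highest row with a nontrivial entry, the minimal $i$ admitting an $(i,j)$-consecution, adjust $i$ upward until that chain contains no smaller consecution, undo that one insertion, and recurse. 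Your identification of the lowest weight conditions ($\red_i(T)$ has no $i$ for $i\in I_0$, and $\rd_1(T)$ starts with $2$) is correct, but the order-independent ``disjoint chains'' picture on which your well-definedness, injectivity, and weight computations all rest is exactly the step that fails, so the bijection does not follow from your outline.
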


\begin{proof}
For a fixed $B \subseteq \Phi^+$, write $B = X_2 \cup \cdots \cup X_n$, where $X_j = \{ \epsilon_i - \epsilon_j \mid i < j \} \subseteq \Phi^+$.
We construct the element $\Xi(B)$ using the following algorithm.
We start with $j = 2$ and $T_1 = L^{-\infty}$ and proceed inductively on $j$.
On step $j$, we perform the following procedure.
\begin{enumerate}
\item Order $X_j$ such that $\epsilon_i - \epsilon_j \prec \epsilon_{i'} - \epsilon_j$ if and only if $i > i'$, and enumerate the elements in $X_j$ by $\beta_1$ to $\beta_{\ell_j}$.

\item\label{step:add_roots} Start with $k = 1$ and define $T_{j,0}$ by adding a trivial column of height $n + 1 - j$ to $T_{j-1}$. Proceed inductively on $k$ as follows.
  \begin{enumerate}
  \item Suppose $\beta_k = \epsilon_i - \epsilon_j$.
  \item\label{substep:top_row} Push in a trivial column of height $n + 2 - j$. Change the entry of the $(j+1)$th entry of the $j$th row\footnote{In our convention, the $j$th row of a tableau $T$ is the $j$th row from the bottom.} of $T_{j,k-1}$ to $j - k$.
  \item Define $b = j - k$.
  \item\label{substep:check} If $b = i$, then terminate this subprocess and denote the resulting tableau by $T_{j,k}$. 
  \item\label{substep:decrease}    
   \begin{enumerate}
    \item Suppose there is a $b+1$ in $\red_{b+1}(T_{j,k-1})$ in cell $\mathsf{c}$ in $T_{j,k-1}$.  If there is a $(b+1)$-box $\mathsf{c}^{\rightarrow}$ immediate to the right of $\mathsf{c}$ in $T_{j,k-1}$, then replace the $b+1$ in $\mathsf{c}^{\rightarrow}$ with $b$.  Otherwise, replace $b+1$ in cell $\mathsf{c}$ with $b$.  Set $a=b$.
    \item Otherwise, find the leftmost $(a,b)$-consecution of $T$ such that $a > i$. Replace this subword with $(a-1)\, a\, \cdots\, (b-1)$. 
    \item If there is no such $(a,b)$-consecution, set $a = b$, push in a trivial $n+1-a$ column, and replace the rightmost trivial $a$ with an $a - 1$.
   \end{enumerate}
  \item\label{substep:redefine} Redefine $b = a - 1$ and repeat from step~(\ref{substep:check}).
  \end{enumerate}

\item Set $T_j = T_{j,\ell_j}$ to be the result of the previous subprocess.
\end{enumerate}
Define $\Xi(B) = T_n$.

Note that the increasing word\footnote{Following~\cite{GJKKK14}, the increasing word of a hook word $x_1 \geq \cdots \geq x_{\ell} < y_1 < \cdots < y_m$ is $y_1 < \cdots < y_m$.} in row $j$ has size $\absval{X_j}$; in particular,
for $\absval{X_j} > 0$, the nontrivial entries of the $j$th row of $T_j$ are
\[
\gyoung(!<\Yboxdimy{16pt}>!<\Yboxdimx{45pt}><j-|X_j|>!<\Yboxdimx{14pt}>_2\hdts!<\Yboxdimx{30pt}><j-1>!<\Yboxdimx{14pt}>j)
\]
by the result of applying~(\ref{substep:top_row}).
Furthermore, there exists precisely one $(i',j)$-consecution in $T_{j,k}$ after adding the root $\beta_k = \epsilon_i - \epsilon_j$ with $i' \leq i$.

It is clear the algorithm does not fail.
So in order to show $\Xi$ is well-defined, we need to show that $\Xi(B)$ is a lowest weight element.
Therefore, we have that $\Xi(B)$ is a $I_0$-lowest weight element since the $b-1$ after applying step~(\ref{substep:decrease}) pairs with the $a' - 1$ from the previous iteration with $k - 1$ (recall from step~(\ref{substep:redefine}) that $b = a' - 1$) and in an $(a,b)$-consecution, all letters except $a$ and possibly $b$ are paired.
Note that if no such $(a,b)$-consecution exists, we can consider the rightmost trivial $a = b$ as an $(a,a)$-consecution for the above proof.
Furthermore, any $1$ will always have $2$ before it in the reading word, and so $f_{\bon} T_{j,k} = \zero$.
Hence, the map $\Xi$ is well-defined.
Moreover, $\Xi$ is weight preserving as $\wt(T_{j,k}) - \wt(T_{j,k-1}) = \beta_k$.

Now we show that $\Xi$ is invertible, and so consider a lowest weight element $T \in \SDT(-\infty)$.
Let $j$ be maximal such that the $j$th row of $T$ has a nontrivial entry.
If so such $j$ exists, then $T = L^{-\infty}$, and we terminate as this is the unique element of weight $0$ in $\SDT(-\infty)$.
Otherwise, let $i$ be minimal such that there is an $(i,j)$-consecution. 
Increase $i$ to $i'$ until the subword $i'w = i'\, (i'+1) \, \cdots \, j$ of this $(i,j)$-consecution does not contain an $(a,b)$-consecution, for some $i< a \le b < j$.
Define $\mathsf{c}$ to be the cell in $T$ containing the $i'$ if the cell immediately to its left is not an $i'$; otherwise, $\mathsf{c}$ is this cell immediately to the left.
Define $T'$ by increasing the letter in $\mathsf{c}$ and every letter in $w$ by $1$ and removing the rightmost column $[n, \dotsc, j+1, j+1]^{\top}$ and any trivial columns so that the result is marginally large.
It is straightforward to see that this is the inverse of adding the root $\beta_{\ell_j} = \epsilon_{i'} - \epsilon_j$ to $T'$ under step~(\ref{step:add_roots}).
By the definition of an $(i',j)$-consecution, $T'$ is a lowest weight element, and furthermore, the maximality of $j$ and minimality of $i$ and $i'$ ensure that the process of doing the aforementioned procedure again yields $\beta_{\ell_j-1} \prec \beta_{\ell_j}$.
Hence, the map $\Xi$ is a bijection.
\end{proof}

\begin{ex}
Consider the roots $B = \{ \epsilon_2 - \epsilon_3, \epsilon_2 - \epsilon_4, \epsilon_1 - \epsilon_4, \epsilon_1 - \epsilon_5 \}$ for $n = 5$.
At each step, we will shade the nontrivial entries and bold the changed letters.
The first step is adding $\epsilon_2 - \epsilon_3$, which results in
\[
T_2 = T_{2,1} = \gyoung(5555555,:;444444,::;333!<\fb><\ml{2}><\ml{3}>!<\fe>,:::;22,::::;1)\ .
\]
Next we add in $\epsilon_2 - \epsilon_4$, and we note that there is no $(3,3)$-consecution, so we change the rightmost (trivial) $3$ in the third row to a $2$. Hence, we have
\[
T_{3,1} = \gyoung(5555555555,:;4444444!<\fb><\ml{3}><\ml{4}>!<\fe>,::;333!<\fb><\ml{2}>23!<\fe>,:::;22,::::;1)\ .
\]
When we add in the $\epsilon_1 - \epsilon_4$, we note there is a $(2,2)$-consecution, which is inside the $(2,3)$-consecution in the third row. Hence, we split this into a $(1,1)$-consecution and a $(3,3)$-consecution, resulting in
\[
T_3 = T_{3,2} = \gyoung(55555555555,:;4444444!<\fb><\ml{2}>34!<\fe>,::;333!<\fb>2<\ml{1}>3!<\fe>,:::;22,::::;1)\ .
\]
Now when we add in $\epsilon_1 - \epsilon_5$, there are no $(a,4)$-consecutions, so we change the rightmost trivial $4$ to a $3$. Next, by Example~\ref{ex:consecutions}, there is a $(2,3)$-consecution in the third row, which then decrease to obtain
\begin{equation}
\label{eq:algorithm_ex}
\Xi(B) = T_4 = T_{4,1} = \gyoung(555555555555!<\fb><\ml{4}><\ml{5}>!<\fe>,:;4444444!<\fb><\ml{3}>234!<\fe>,::;333!<\fb><\ml{1}>1<\ml{2}>!<\fe>,:::;22,::::;1)\ .
\end{equation}

Now let us calculate one step of the inverse operation for $T_4$. There exists a $(1,5)$-consecution that is marked in bold:
\[
\gyoung(555555555555!<\fb><\ml{4}><\ml{5}>!<\fe>,:;4444444!<\fb><\ml{3}>234!<\fe>,::;333!<\fb>1<\ml{1}><\ml{2}>!<\fe>,:::;22,::::;1)\ ,
\]
and note that it does not contain the $(1,4)$-consecution. However, the cell immediately to the left of the $1$ in the $(1,5)$-consecution is also a $1$, so the letters we increase are precisely those changed in Equation~\eqref{eq:algorithm_ex}.
\end{ex}

\begin{ex}
Consider the roots $B = \{ \epsilon_1-\epsilon_3, \epsilon_2-\epsilon_5, \epsilon_1-\epsilon_5\}$.  Adding the root $\epsilon_1-\epsilon_3$ results in
\[
T_3 = T_{3,1} = \gyoung(55555555,:;4444444,::;3333!<\fb><\ml{2}><\ml{3}>!<\fe>,:::;22!<\fb><\ml{1}>!<\fe>,::::;1)\ ,
\]
and adding the root $\epsilon_2-\epsilon_5$ results in
\[
T_{5,1} = \gyoung(5555555555!<\fb><\ml{4}><\ml{5}>!<\fe>,:;44444444!<\fb><\ml{3}>!<\fe>,::;3333!<\fb><\ml{2}>23!<\fe>,:::;22!<\fb>1!<\fe>,::::;1)\ .
\]
Finally, we add $\epsilon_1 - \epsilon_5$, and in this case, when we add the $3$ to the top row, the $3$ in the subsequent row is in $\red_3(T_{5,1})$, and so we must change that to a $2$. Denote the result $T_{5,1}'$, and then the leftmost $2$ in the next row becomes unpaired in $\red_2(T_{5,1}')$, so we must additionally change that to a $1$. However, the cell immediately to its right is also a $2$, hence, we obtain
\[
\Xi(B) = T_5 = T_{5,2} = \gyoung(5555555555!<\fb><\ml{3}>45!<\fe>,:;44444444!<\fb><\ml{2}>!<\fe>,::;3333!<\fb>2<\ml{1}>3!<\fe>,:::;22!<\fb>1!<\fe>,::::;1)\ .
\]
\end{ex}

\subsection{Recovering $\SDT(\lambda)$ from $\SDT(-\infty)$}

Our construction is parallel to the $\gl(n)$-crystal construction of $B(\lambda)$ from $B(\infty)$ by essentially undoing the direct limit construction and adjusting $\varepsilon_i(b)$ to be the number of times we can apply $e_i$ before getting $\zero$. See Figure~\ref{fig:hw} for an example. However, we note that whenever $\lambda_1-\lambda_2 = 0$, we obtain a connected component that is too large as we should have $e_{\overline{1}}( L^{-\infty} \otimes r^{\vee}_{\lambda} ) = \zero$. Thus, we would require a modification to the tensor product rule, but we can obtain $\SDT(\lambda)$ when $\lambda_i > \lambda_{i+1}$ for all $i \in I_0$.

\begin{thm}
\label{thm:cutting_it_out}
Let $\lambda \in \Lambda^-$ be such that $\lambda_i > \lambda_{i+1}$ for all $i \in I_0$.
Let 
\[
\mu = \sum_{i=1}^n (\lambda_i - k) \epsilon_{n+1-i} \text{ for some } k \geq -\min \{ \lambda_i \mid i \in I_0 \}.
\]
The connected component $\C$ of $\SDT(-\infty) \otimes \R_{\mu}^{\vee}$ generated by $L^{-\infty} \otimes r_{\mu}^{\vee}$, using the tensor product rule in Equation~\eqref{eq:tensor}, is isomorphic to $\SDT(\lambda)$ as $\q(n)$-crystals.
\end{thm}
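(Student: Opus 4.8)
The plan is to construct an explicit crystal isomorphism $\Phi\colon\SDT(\lambda)\to\C$ taking the lowest weight generator $L^{\lambda}$ to $L^{-\infty}\otimes r_{\mu}^{\vee}$, in exact parallel with the classical recovery of $B(\lambda)$ from $B(\infty)$. The first step is to confirm that $L^{-\infty}\otimes r_{\mu}^{\vee}$ really is a lowest weight element of $\SDT(-\infty)\otimes\R_{\mu}^{\vee}$. This is immediate from the tensor product rule~\eqref{eq:tensor}: for $i\in I_0$ we have $f_i L^{-\infty}=\zero$ and $f_i r_{\mu}^{\vee}=\zero$, so $f_i(L^{-\infty}\otimes r_{\mu}^{\vee})=\zero$; and since $e_{\bon}L^{-\infty}\neq\zero$ while $f_{\bon}L^{-\infty}=\zero$, the rule for $f_{\bon}$ gives $f_{\bon}(L^{-\infty}\otimes r_{\mu}^{\vee})=f_{\bon}L^{-\infty}\otimes r_{\mu}^{\vee}=\zero$. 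One then records its weight, $\wt(L^{-\infty})+\mu$, and checks that the hypothesis on $k$ makes $\R_{\mu}^{\vee}$ a genuine $\q(n)$-crystal (Lemma~\ref{lemma:RT_crystals}) and makes this weight agree with $\wt(L^{\lambda})$ up to the overall shift by a multiple of $(1,\dots,1)$ noted after Theorem~\ref{thm:SDT_crystal}, the admissible values of $k$ parametrizing precisely that shift freedom. The hypothesis $\lambda_i>\lambda_{i+1}$ also enters here: it makes $\varphi_{\bon}(r_{\mu}^{\vee})$ small enough that $e_{\bon}$ acts on $L^{-\infty}\otimes r_{\mu}^{\vee}$ exactly as it does on $L^{\lambda}$ in $\SDT(\lambda)$, whereas if $\lambda_1=\lambda_2$ one would have $e_{\bon}(L^{-\infty}\otimes r_{\mu}^{\vee})\neq\zero$ although $e_{\bon}L^{\lambda}=\zero$, which is the obstruction flagged just before the theorem.

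By Corollary~\ref{cor:up_system} and Theorem~\ref{thm:up-inf} the structure map of the direct limit is a crystal embedding $\psi_{\lambda}\colon\SDT(\lambda)\otimes\T_{-\lambda}\longhookarrow\SDT(-\infty)$, concretely sending $T\otimes t_{-\lambda}$ to the dual marginally large tableau obtained from $T$ by padding with trivial columns. Composing with $(-)\otimes r_{\mu}^{\vee}$ and with the bijection $T\mapsto T\otimes t_{-\lambda}$ (which commutes with all crystal operators) defines $\Phi(T)=\psi_{\lambda}(T\otimes t_{-\lambda})\otimes r_{\mu}^{\vee}$, so that $\Phi(L^{\lambda})=L^{-\infty}\otimes r_{\mu}^{\vee}$. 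The crux is showing $\Phi$ intertwines all the crystal operators. For the lowering operators $f_i$ and $f_{\bon}$ this is essentially contained in the proof of Lemma~\ref{lem:up_inclusions}: $\psi_{\lambda}$ commutes with $f_i$ and $f_{\bon}$ as long as the result stays in its image, the reading-word comparisons there show that $\red_i(T)$ and $\red_i\bigl(\psi_{\lambda}(T\otimes t_{-\lambda})\bigr)$ differ only by trivial columns, and the factor $r_{\mu}^{\vee}$ never intervenes for a lowering operator because $f_i r_{\mu}^{\vee}=\zero$ forces the action onto the first tensor factor whenever it is nonzero. For the raising operators the point is that $\R_{\mu}^{\vee}$ truncates the action at exactly the right place: $e_i T=\zero$ in $\SDT(\lambda)$ precisely when $\varepsilon_i(T)=0$, and the choices $\varepsilon_i(r_{\mu}^{\vee})=0$ and $\varphi_i(r_{\mu}^{\vee})=\mu_i-\mu_{i+1}$ are exactly what make this coincide via~\eqref{eq:tensor} with $e_i\Phi(T)=\Phi(T)\otimes e_i r_{\mu}^{\vee}=\zero$; the same bookkeeping, together with the formulas for $\varepsilon_i$, $\varphi_i$, $\wt$ on a tensor product, shows $\Phi$ preserves $\wt$, $\varepsilon_i$, $\varphi_i$, so that $\Phi$ is a crystal morphism.

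Injectivity of $\Phi$ follows since $\psi_{\lambda}$ is an embedding and tensoring with the singleton $\R_{\mu}^{\vee}$ is harmless set-theoretically. Surjectivity onto $\C$ follows because $\SDT(\lambda)$ is connected and generated by $L^{\lambda}$ (Theorem~\ref{thm:SDT_crystal}), so the image of $\Phi$ is a connected subset of $\SDT(-\infty)\otimes\R_{\mu}^{\vee}$ containing $L^{-\infty}\otimes r_{\mu}^{\vee}$ and closed under all crystal operators by the previous paragraph, hence equal to the connected component $\C$. Therefore $\Phi$ is an isomorphism of $\q(n)$-crystals. (Alternatively one could invoke the local characterization of polynomial-representation crystals in~\cite{AKO18,GHPS18}: $\SDT(-\infty)$ satisfies those axioms as a direct limit of crystals that do, the axioms persist under $\otimes\R_{\mu}^{\vee}$ and under passing to a connected component, and the unique connected crystal of that type with lowest weight element of weight $\wt(L^{-\infty})+\mu$ is $\SDT(\lambda)$.) The main obstacle is the morphism verification in the second paragraph: one must control how the operators on $\SDT(-\infty)$ insert or delete trivial columns to maintain the dual marginally large condition against how the $\R_{\mu}^{\vee}$-factor caps the $e_i$- and $e_{\bon}$-strings, and the case of $e_{\bon}$ is exactly where the hypothesis $\lambda_i>\lambda_{i+1}$ cannot be dropped, which is why the general strict partition requires the modified tensor rule anticipated in the text.
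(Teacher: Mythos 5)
Your overall route is the paper's own, run in the opposite direction: the paper defines $\psi\colon\C\to\SDT(\lambda)$ by pushing trivial columns in/out, handles $e_{\bon},f_{\bon}$ via the tensor rule together with Lemma~\ref{lem:up_inclusions}, and handles the $I_0$-operators by appealing to the classical $\gl(n)$ recovery of a highest/lowest weight crystal from the limit crystal; your $\Phi$ is the inverse of that map. However, your anchor claim is wrong: $\Phi(L^{\lambda})\neq L^{-\infty}\otimes r^{\vee}_{\mu}$. The tableau $L^{\lambda}$ (whose $i$-th row from the bottom is filled with $i$'s) is the generator of Theorem~\ref{thm:SDT_crystal}, but it is \emph{not} the lowest weight element of $\SDT(\lambda)$ unless $w_0\lambda$ is the full staircase, and padding it with trivial columns does not produce $L^{-\infty}$. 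Concretely, for $n=3$ and $w_0\lambda=(3,1)$, $L^{\lambda}$ has rows $222$ and $1$; its dual marginally large representative has rows $3333222$, $221$, $1$ (this is an interior vertex of Figure~\ref{fig:hw}), whereas $L^{-\infty}$ has rows $333$, $22$, $1$ and corresponds under the trivial-column correspondence to the tableau with rows $333$ and $2$, which is the genuine lowest weight element of $\SDT(\lambda)$. Accordingly, your paragraph-one weight check also fails as stated: $\wt(L^{\lambda})=(1,3,0)$ and the weight attached to $L^{-\infty}\otimes r^{\vee}_{\mu}$ matches the lowest weight $(0,1,3)$ (up to the determinant shift), and these two do not differ by a multiple of $(1,\dotsc,1)$. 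The argument must be anchored at the lowest weight elements, as the paper does implicitly by generating $\C$ from $L^{-\infty}\otimes r^{\vee}_{\mu}$; with that repair your injectivity/surjectivity paragraph still functions, since the image of $\Phi$ does contain $L^{-\infty}\otimes r^{\vee}_{\mu}$ (just not as the image of $L^{\lambda}$).

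Two further steps are asserted where the real work lies. First, your claim that $r^{\vee}_{\mu}$ ``never intervenes'' for lowering operators is not literally true: by~\eqref{eq:tensor}, $f_i(b\otimes r^{\vee}_{\mu})=\zero$ whenever $\varphi_i(r^{\vee}_{\mu})>\varepsilon_i(b)$, so the second factor can truncate $f_i$-strings, and one must check this happens exactly when the corresponding element of $\SDT(\lambda)$ satisfies $f_iT=\zero$. Likewise, the statement that $\varepsilon_i(r^{\vee}_{\mu})=0$ and $\varphi_i(r^{\vee}_{\mu})=\mu_i-\mu_{i+1}$ ``exactly'' cap the $e_i$-strings requires the precise relation between $\varepsilon_i$ on $\SDT(-\infty)$ --- which is the function inherited from the direct limit, not the number of unpaired letters and not a string length (indeed $e_i$ never vanishes on $\SDT(-\infty)$ for $i\in I_0$, yet $\varepsilon_i$ can be $0$ or negative there) --- and $\varepsilon_i$ on $\SDT(\lambda)$. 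This is exactly the $\gl(n)$ bookkeeping that the paper discharges by citing the classical construction of $B(\lambda)$ inside $B(\infty)\otimes\R_{\lambda}$ restricted to $I_0$; if you want to argue it directly you have to carry out that computation rather than assert it. Finally, your explanation of where $\lambda_i>\lambda_{i+1}$ enters is off: the $\bon$-part of~\eqref{eq:tensor} never consults $\varphi_{\bon}$ of the second factor (no such quantity is defined for $r^{\vee}_{\mu}$), and $e_{\bon}$ always routes to the first factor here; the hypothesis is needed because for non-strict $\lambda$ the rule consequently fails to kill $e_{\bon}$ at the lowest weight element, making the component too large, which is the obstruction the paper records just before the theorem.
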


\begin{proof}
We define a map $\psi \colon \C \longrightarrow \SDT(\lambda)$ by $\psi(T \otimes r^{\vee}_{\mu}) = E(T)$, where $E(T)$ is the semistandard decomposition tableau formed by pushing in/out trivial columns. The tensor product rule in Equation~\eqref{eq:tensor} says that $e_{\bon}(T \otimes r^{\vee}_{\mu}) = (e_{\bon} T) \otimes r^{\vee}_{\mu}$ and similarly for $f_{\bon}$. Moreover, we have in fact shown in Lemma~\ref{lem:up_inclusions} that $E$ commutes with all $e_{\bon}$ and $f_{\bon}$ crystal operators, and hence, $e_{\bon}$ and $f_{\bon}$ commute with $\psi$.
Next, we note that from the $\gl(n)$-case, we have the $I_0$-component generated by $L^{-\infty} \otimes r_{\mu}$ is isomorphic to the $I_0$-component of $L^{\lambda}$ and $\psi$ is a crystal isomorphism when restricted to these $I_0$-components.
Thus, since both $\C$ and $\SDT(\lambda)$ are $\q(n)$-crystals and the computation of $\red_i(T)$, we have that $\psi$ commutes with $e_i$ and $f_i$ for $i \in I_0$.
Hence, $\psi$ is the desired crystal isomorphism.
\end{proof}

Let us remark on the $\mu$ in Theorem~\ref{thm:cutting_it_out}. Recall that tensoring $B(\lambda)$ with the determinant representation of $\q(n)$/$\gl(n)$ (\textit{i.e.}, the one-dimensional representation of weight $(1, \dotsc, 1)$), does not change the crystal structure of $B(\lambda)$ up to a shift in the weight by $(1,\dotsc,1)$. Therefore, to remove this dependence of the determinant representation, we could modify the condition $\wt\bigl(\psi(b)\bigr) = \wt(b)$ of a crystal morphism to instead be $\wt_i\bigl(\psi(b)\bigr) = \wt_i(b)$ for all $i \in I_0$. All of the theory given above holds under this modification as our tensor product rule does not change under this shift in weight (unlike that given in~\cite{GJKKK14}). This is analogous to considering the $\mathfrak{sl}(n)$-crystals instead of $\gl(n)$-crystals.

The condition $\lambda_i > \lambda_{i+1}$ ensures that the weight $\lambda$ is not on a wall of the dominant chamber.  Furthermore, when $\lambda_i = \lambda_{i+1} > 0$, the corresponding irreducible representation is not finite-dimensional \cite[Theorem 2.18]{CW12}.  Since the crystal structure does not change when shifting the weight by $(1,1,\dots,1)$ but the representation-theoretic information does (the dimensionality changes), we require all parts to be not equal to preserve finite-dimensionality for the corresponding irreducible representation.

More concretely, if $\nu = \lambda + k (1^n)$ for some $k \in \ZZ$, then $\R_\lambda^\vee \iso \R_\nu^\vee$ and $B(\lambda) \iso B(\nu)$ as $\q(n)$-crystals under this broader definition of a crystal isomorphism. Therefore, our choice of $\mu$ in Theorem~\ref{thm:cutting_it_out} is so that we have $\mu \in \ZZ^n_{\geq 0}$, but it does not change the resulting crystal graphs, $\varepsilon_i$, and $\varphi_i$.

\begin{ex}
We consider $\q(3)$ and $\lambda = -3\epsilon_1 - \epsilon_2$. Then we can take $\mu = 3\epsilon_3 + \epsilon_2 - k(\epsilon_1 + \epsilon_2 + \epsilon_3)$ for some $k \geq 3$. Thus, when we take the connected component generated by $L^{-\infty} \otimes r^{\vee}_{\mu}$, we obtain the crystal graph in Figure~\ref{fig:hw} (taken explicitly with $k=3$, so $\mu = 0\epsilon_3 - 2 \epsilon_2 - 3\epsilon_1$). Thus for $k=3$, we obtain a crystal isomorphic to $\SDT(\lambda)$ given by~\cite[Figure~1]{GJKKK14}.
\end{ex}

\begin{figure}[t]
\[
\begin{tikzpicture}[>=latex,line join=bevel,every node/.style={scale=.5},yscale=.55,xscale=0.45]
\node (node_22) at (172.0bp,21.5bp) [draw,draw=none] {${\def\lr#1{\multicolumn{1}{|@{\hspace{.6ex}}c@{\hspace{.6ex}}|}{\raisebox{-.3ex}{$#1$}}}\raisebox{-.6ex}{$\begin{array}[b]{*{3}c}\cline{1-3}\lr{3}&\lr{3}&\lr{3}\\\cline{1-3}&\lr{2}&\lr{2}\\\cline{2-3}&&\lr{1}\\\cline{3-3}\end{array}$}} \otimes {r^\vee_{-3\epsilon_{1} - 2\epsilon_{2}}}$};
  \node (node_23) at (101.0bp,118.5bp) [draw,draw=none] {${\def\lr#1{\multicolumn{1}{|@{\hspace{.6ex}}c@{\hspace{.6ex}}|}{\raisebox{-.3ex}{$#1$}}}\raisebox{-.6ex}{$\begin{array}[b]{*{4}c}\cline{1-4}\lr{3}&\lr{3}&\lr{3}&\lr{3}\\\cline{1-4}&\lr{2}&\lr{2}&\lr{1}\\\cline{2-4}&&\lr{1}\\\cline{3-3}\end{array}$}} \otimes {r^\vee_{-3\epsilon_{1} - 2\epsilon_{2}}}$};
  \node (node_20) at (454.0bp,118.5bp) [draw,draw=none] {${\def\lr#1{\multicolumn{1}{|@{\hspace{.6ex}}c@{\hspace{.6ex}}|}{\raisebox{-.3ex}{$#1$}}}\raisebox{-.6ex}{$\begin{array}[b]{*{5}c}\cline{1-5}\lr{3}&\lr{3}&\lr{3}&\lr{2}&\lr{3}\\\cline{1-5}&\lr{2}&\lr{2}\\\cline{2-3}&&\lr{1}\\\cline{3-3}\end{array}$}} \otimes {r^\vee_{-3\epsilon_{1} - 2\epsilon_{2}}}$};
  \node (node_21) at (635.0bp,217.5bp) [draw,draw=none] {${\def\lr#1{\multicolumn{1}{|@{\hspace{.6ex}}c@{\hspace{.6ex}}|}{\raisebox{-.3ex}{$#1$}}}\raisebox{-.6ex}{$\begin{array}[b]{*{6}c}\cline{1-6}\lr{3}&\lr{3}&\lr{3}&\lr{3}&\lr{2}&\lr{3}\\\cline{1-6}&\lr{2}&\lr{2}&\lr{1}\\\cline{2-4}&&\lr{1}\\\cline{3-3}\end{array}$}} \otimes {r^\vee_{-3\epsilon_{1} - 2\epsilon_{2}}}$};
  \node (node_9) at (454.0bp,316.5bp) [draw,draw=none] {${\def\lr#1{\multicolumn{1}{|@{\hspace{.6ex}}c@{\hspace{.6ex}}|}{\raisebox{-.3ex}{$#1$}}}\raisebox{-.6ex}{$\begin{array}[b]{*{5}c}\cline{1-5}\lr{3}&\lr{3}&\lr{3}&\lr{1}&\lr{2}\\\cline{1-5}&\lr{2}&\lr{2}\\\cline{2-3}&&\lr{1}\\\cline{3-3}\end{array}$}} \otimes {r^\vee_{-3\epsilon_{1} - 2\epsilon_{2}}}$};
  \node (node_8) at (491.0bp,514.5bp) [draw,draw=none] {${\def\lr#1{\multicolumn{1}{|@{\hspace{.6ex}}c@{\hspace{.6ex}}|}{\raisebox{-.3ex}{$#1$}}}\raisebox{-.6ex}{$\begin{array}[b]{*{7}c}\cline{1-7}\lr{3}&\lr{3}&\lr{3}&\lr{3}&\lr{2}&\lr{1}&\lr{2}\\\cline{1-7}&\lr{2}&\lr{2}&\lr{1}\\\cline{2-4}&&\lr{1}\\\cline{3-3}\end{array}$}} \otimes {r^\vee_{-3\epsilon_{1} - 2\epsilon_{2}}}$};
  \node (node_7) at (196.0bp,316.5bp) [draw,draw=none] {${\def\lr#1{\multicolumn{1}{|@{\hspace{.6ex}}c@{\hspace{.6ex}}|}{\raisebox{-.3ex}{$#1$}}}\raisebox{-.6ex}{$\begin{array}[b]{*{5}c}\cline{1-5}\lr{3}&\lr{3}&\lr{3}&\lr{3}&\lr{1}\\\cline{1-5}&\lr{2}&\lr{2}&\lr{1}\\\cline{2-4}&&\lr{1}\\\cline{3-3}\end{array}$}} \otimes {r^\vee_{-3\epsilon_{1} - 2\epsilon_{2}}}$};
  \node (node_6) at (200.0bp,217.5bp) [draw,draw=none] {${\def\lr#1{\multicolumn{1}{|@{\hspace{.6ex}}c@{\hspace{.6ex}}|}{\raisebox{-.3ex}{$#1$}}}\raisebox{-.6ex}{$\begin{array}[b]{*{4}c}\cline{1-4}\lr{3}&\lr{3}&\lr{3}&\lr{1}\\\cline{1-4}&\lr{2}&\lr{2}\\\cline{2-3}&&\lr{1}\\\cline{3-3}\end{array}$}} \otimes {r^\vee_{-3\epsilon_{1} - 2\epsilon_{2}}}$};
  \node (node_5) at (67.0bp,415.5bp) [draw,draw=none] {${\def\lr#1{\multicolumn{1}{|@{\hspace{.6ex}}c@{\hspace{.6ex}}|}{\raisebox{-.3ex}{$#1$}}}\raisebox{-.6ex}{$\begin{array}[b]{*{6}c}\cline{1-6}\lr{3}&\lr{3}&\lr{3}&\lr{3}&\lr{2}&\lr{1}\\\cline{1-6}&\lr{2}&\lr{2}&\lr{1}\\\cline{2-4}&&\lr{1}\\\cline{3-3}\end{array}$}} \otimes {r^\vee_{-3\epsilon_{1} - 2\epsilon_{2}}}$};
  \node (node_4) at (325.0bp,316.5bp) [draw,draw=none] {${\def\lr#1{\multicolumn{1}{|@{\hspace{.6ex}}c@{\hspace{.6ex}}|}{\raisebox{-.3ex}{$#1$}}}\raisebox{-.6ex}{$\begin{array}[b]{*{5}c}\cline{1-5}\lr{3}&\lr{3}&\lr{3}&\lr{2}&\lr{1}\\\cline{1-5}&\lr{2}&\lr{2}\\\cline{2-3}&&\lr{1}\\\cline{3-3}\end{array}$}} \otimes {r^\vee_{-3\epsilon_{1} - 2\epsilon_{2}}}$};
  \node (node_3) at (184.0bp,514.5bp) [draw,draw=none] {${\def\lr#1{\multicolumn{1}{|@{\hspace{.6ex}}c@{\hspace{.6ex}}|}{\raisebox{-.3ex}{$#1$}}}\raisebox{-.6ex}{$\begin{array}[b]{*{7}c}\cline{1-7}\lr{3}&\lr{3}&\lr{3}&\lr{3}&\lr{2}&\lr{2}&\lr{1}\\\cline{1-7}&\lr{2}&\lr{2}&\lr{1}\\\cline{2-4}&&\lr{1}\\\cline{3-3}\end{array}$}} \otimes {r^\vee_{-3\epsilon_{1} - 2\epsilon_{2}}}$};
  \node (node_2) at (346.0bp,514.5bp) [draw,draw=none] {${\def\lr#1{\multicolumn{1}{|@{\hspace{.6ex}}c@{\hspace{.6ex}}|}{\raisebox{-.3ex}{$#1$}}}\raisebox{-.6ex}{$\begin{array}[b]{*{6}c}\cline{1-6}\lr{3}&\lr{3}&\lr{3}&\lr{3}&\lr{1}&\lr{1}\\\cline{1-6}&\lr{2}&\lr{2}&\lr{1}\\\cline{2-4}&&\lr{1}\\\cline{3-3}\end{array}$}} \otimes {r^\vee_{-3\epsilon_{1} - 2\epsilon_{2}}}$};
  \node (node_1) at (352.0bp,415.5bp) [draw,draw=none] {${\def\lr#1{\multicolumn{1}{|@{\hspace{.6ex}}c@{\hspace{.6ex}}|}{\raisebox{-.3ex}{$#1$}}}\raisebox{-.6ex}{$\begin{array}[b]{*{5}c}\cline{1-5}\lr{3}&\lr{3}&\lr{3}&\lr{1}&\lr{1}\\\cline{1-5}&\lr{2}&\lr{2}\\\cline{2-3}&&\lr{1}\\\cline{3-3}\end{array}$}} \otimes {r^\vee_{-3\epsilon_{1} - 2\epsilon_{2}}}$};
  \node (node_0) at (346.0bp,613.5bp) [draw,draw=none] {${\def\lr#1{\multicolumn{1}{|@{\hspace{.6ex}}c@{\hspace{.6ex}}|}{\raisebox{-.3ex}{$#1$}}}\raisebox{-.6ex}{$\begin{array}[b]{*{7}c}\cline{1-7}\lr{3}&\lr{3}&\lr{3}&\lr{3}&\lr{2}&\lr{1}&\lr{1}\\\cline{1-7}&\lr{2}&\lr{2}&\lr{1}\\\cline{2-4}&&\lr{1}\\\cline{3-3}\end{array}$}} \otimes {r^\vee_{-3\epsilon_{1} - 2\epsilon_{2}}}$};
  \node (node_19) at (734.0bp,316.5bp) [draw,draw=none] {${\def\lr#1{\multicolumn{1}{|@{\hspace{.6ex}}c@{\hspace{.6ex}}|}{\raisebox{-.3ex}{$#1$}}}\raisebox{-.6ex}{$\begin{array}[b]{*{7}c}\cline{1-7}\lr{3}&\lr{3}&\lr{3}&\lr{3}&\lr{2}&\lr{2}&\lr{3}\\\cline{1-7}&\lr{2}&\lr{2}&\lr{1}\\\cline{2-4}&&\lr{1}\\\cline{3-3}\end{array}$}} \otimes {r^\vee_{-3\epsilon_{1} - 2\epsilon_{2}}}$};
  \node (node_18) at (589.0bp,316.5bp) [draw,draw=none] {${\def\lr#1{\multicolumn{1}{|@{\hspace{.6ex}}c@{\hspace{.6ex}}|}{\raisebox{-.3ex}{$#1$}}}\raisebox{-.6ex}{$\begin{array}[b]{*{6}c}\cline{1-6}\lr{3}&\lr{3}&\lr{3}&\lr{3}&\lr{1}&\lr{3}\\\cline{1-6}&\lr{2}&\lr{2}&\lr{1}\\\cline{2-4}&&\lr{1}\\\cline{3-3}\end{array}$}} \otimes {r^\vee_{-3\epsilon_{1} - 2\epsilon_{2}}}$};
  \node (node_17) at (454.0bp,217.5bp) [draw,draw=none] {${\def\lr#1{\multicolumn{1}{|@{\hspace{.6ex}}c@{\hspace{.6ex}}|}{\raisebox{-.3ex}{$#1$}}}\raisebox{-.6ex}{$\begin{array}[b]{*{5}c}\cline{1-5}\lr{3}&\lr{3}&\lr{3}&\lr{1}&\lr{3}\\\cline{1-5}&\lr{2}&\lr{2}\\\cline{2-3}&&\lr{1}\\\cline{3-3}\end{array}$}} \otimes {r^\vee_{-3\epsilon_{1} - 2\epsilon_{2}}}$};
  \node (node_16) at (734.0bp,415.5bp) [draw,draw=none] {${\def\lr#1{\multicolumn{1}{|@{\hspace{.6ex}}c@{\hspace{.6ex}}|}{\raisebox{-.3ex}{$#1$}}}\raisebox{-.6ex}{$\begin{array}[b]{*{7}c}\cline{1-7}\lr{3}&\lr{3}&\lr{3}&\lr{3}&\lr{2}&\lr{1}&\lr{3}\\\cline{1-7}&\lr{2}&\lr{2}&\lr{1}\\\cline{2-4}&&\lr{1}\\\cline{3-3}\end{array}$}} \otimes {r^\vee_{-3\epsilon_{1} - 2\epsilon_{2}}}$};
  \node (node_15) at (76.0bp,217.5bp) [draw,draw=none] {${\def\lr#1{\multicolumn{1}{|@{\hspace{.6ex}}c@{\hspace{.6ex}}|}{\raisebox{-.3ex}{$#1$}}}\raisebox{-.6ex}{$\begin{array}[b]{*{5}c}\cline{1-5}\lr{3}&\lr{3}&\lr{3}&\lr{3}&\lr{2}\\\cline{1-5}&\lr{2}&\lr{2}&\lr{1}\\\cline{2-4}&&\lr{1}\\\cline{3-3}\end{array}$}} \otimes {r^\vee_{-3\epsilon_{1} - 2\epsilon_{2}}}$};
  \node (node_14) at (223.0bp,118.5bp) [draw,draw=none] {${\def\lr#1{\multicolumn{1}{|@{\hspace{.6ex}}c@{\hspace{.6ex}}|}{\raisebox{-.3ex}{$#1$}}}\raisebox{-.6ex}{$\begin{array}[b]{*{4}c}\cline{1-4}\lr{3}&\lr{3}&\lr{3}&\lr{2}\\\cline{1-4}&\lr{2}&\lr{2}\\\cline{2-3}&&\lr{1}\\\cline{3-3}\end{array}$}} \otimes {r^\vee_{-3\epsilon_{1} - 2\epsilon_{2}}}$};
  \node (node_13) at (61.0bp,316.5bp) [draw,draw=none] {${\def\lr#1{\multicolumn{1}{|@{\hspace{.6ex}}c@{\hspace{.6ex}}|}{\raisebox{-.3ex}{$#1$}}}\raisebox{-.6ex}{$\begin{array}[b]{*{6}c}\cline{1-6}\lr{3}&\lr{3}&\lr{3}&\lr{3}&\lr{2}&\lr{2}\\\cline{1-6}&\lr{2}&\lr{2}&\lr{1}\\\cline{2-4}&&\lr{1}\\\cline{3-3}\end{array}$}} \otimes {r^\vee_{-3\epsilon_{1} - 2\epsilon_{2}}}$};
  \node (node_12) at (324.0bp,217.5bp) [draw,draw=none] {${\def\lr#1{\multicolumn{1}{|@{\hspace{.6ex}}c@{\hspace{.6ex}}|}{\raisebox{-.3ex}{$#1$}}}\raisebox{-.6ex}{$\begin{array}[b]{*{5}c}\cline{1-5}\lr{3}&\lr{3}&\lr{3}&\lr{2}&\lr{2}\\\cline{1-5}&\lr{2}&\lr{2}\\\cline{2-3}&&\lr{1}\\\cline{3-3}\end{array}$}} \otimes {r^\vee_{-3\epsilon_{1} - 2\epsilon_{2}}}$};
  \node (node_11) at (212.0bp,415.5bp) [draw,draw=none] {${\def\lr#1{\multicolumn{1}{|@{\hspace{.6ex}}c@{\hspace{.6ex}}|}{\raisebox{-.3ex}{$#1$}}}\raisebox{-.6ex}{$\begin{array}[b]{*{7}c}\cline{1-7}\lr{3}&\lr{3}&\lr{3}&\lr{3}&\lr{2}&\lr{2}&\lr{2}\\\cline{1-7}&\lr{2}&\lr{2}&\lr{1}\\\cline{2-4}&&\lr{1}\\\cline{3-3}\end{array}$}} \otimes {r^\vee_{-3\epsilon_{1} - 2\epsilon_{2}}}$};
  \node (node_10) at (488.0bp,415.5bp) [draw,draw=none] {${\def\lr#1{\multicolumn{1}{|@{\hspace{.6ex}}c@{\hspace{.6ex}}|}{\raisebox{-.3ex}{$#1$}}}\raisebox{-.6ex}{$\begin{array}[b]{*{6}c}\cline{1-6}\lr{3}&\lr{3}&\lr{3}&\lr{3}&\lr{1}&\lr{2}\\\cline{1-6}&\lr{2}&\lr{2}&\lr{1}\\\cline{2-4}&&\lr{1}\\\cline{3-3}\end{array}$}} \otimes {r^\vee_{-3\epsilon_{1} - 2\epsilon_{2}}}$};
  \draw [red,->] (node_0) ..controls (346.0bp,578.68bp) and (346.0bp,561.22bp)  .. (node_2);
  \definecolor{strokecol}{rgb}{0.0,0.0,0.0};
  \pgfsetstrokecolor{strokecol}
  \draw (354.5bp,564.0bp) node {$2$};
  \draw [red,->] (node_8) ..controls (489.95bp,479.68bp) and (489.41bp,462.22bp)  .. (node_10);
  \draw (498.5bp,465.0bp) node {$2$};
  \draw [blue,->] (node_7) ..controls (197.39bp,281.68bp) and (198.11bp,264.22bp)  .. (node_6);
  \draw (207.5bp,267.0bp) node {$1$};
  \draw [blue,->] (node_10) ..controls (476.1bp,380.55bp) and (469.91bp,362.88bp)  .. (node_9);
  \draw (483.5bp,366.0bp) node {$1$};
  \draw [red,->] (node_10) ..controls (524.15bp,379.78bp) and (543.89bp,360.83bp)  .. (node_18);
  \draw (556.5bp,366.0bp) node {$2$};
  \draw [dashed,blue,->] (node_18) ..controls (605.16bp,281.43bp) and (613.64bp,263.54bp)  .. (node_21);
  \draw (625.5bp,267.0bp) node {$\overline{1}$};
  \draw [blue,->] (node_5) ..controls (70.786bp,382.64bp) and (71.445bp,368.53bp)  .. (70.0bp,356.0bp) .. controls (69.704bp,353.43bp) and (69.301bp,350.79bp)  .. (node_13);
  \draw (79.5bp,366.0bp) node {$1$};
  \draw [dashed,blue,->] (node_5) ..controls (52.745bp,388.28bp) and (50.345bp,382.05bp)  .. (49.0bp,376.0bp) .. controls (46.974bp,366.88bp) and (47.86bp,356.94bp)  .. (node_13);
  \draw (57.5bp,366.0bp) node {$\overline{1}$};
  \draw [blue,->] (node_0) ..controls (286.94bp,577.14bp) and (253.47bp,557.09bp)  .. (node_3);
  \draw (288.5bp,564.0bp) node {$1$};
  \draw [red,->] (node_5) ..controls (113.68bp,379.4bp) and (139.76bp,359.79bp)  .. (node_7);
  \draw (152.5bp,366.0bp) node {$2$};
  \draw [blue,->] (node_23) ..controls (126.36bp,83.566bp) and (139.46bp,66.039bp)  .. (node_22);
  \draw (151.5bp,70.0bp) node {$1$};
  \draw [red,->] (node_11) ..controls (191.2bp,381.52bp) and (179.06bp,366.13bp)  .. (165.0bp,356.0bp) .. controls (158.72bp,351.48bp) and (146.13bp,346.0bp)  .. (node_13);
  \draw (194.5bp,366.0bp) node {$2$};
  \draw [red,->] (node_19) ..controls (698.57bp,280.78bp) and (679.22bp,261.83bp)  .. (node_21);
  \draw (701.5bp,267.0bp) node {$2$};
  \draw [red,->] (node_3) ..controls (141.81bp,478.53bp) and (118.43bp,459.14bp)  .. (node_5);
  \draw (144.5bp,465.0bp) node {$2$};
  \draw [blue,->] (node_18) ..controls (540.14bp,280.4bp) and (512.86bp,260.79bp)  .. (node_17);
  \draw (542.5bp,267.0bp) node {$1$};
  \draw [red,->] (node_12) ..controls (287.85bp,181.78bp) and (268.11bp,162.83bp)  .. (node_14);
  \draw (291.5bp,168.0bp) node {$2$};
  \draw [red,->] (node_9) ..controls (454.0bp,281.68bp) and (454.0bp,264.22bp)  .. (node_17);
  \draw (462.5bp,267.0bp) node {$2$};
  \draw [blue,->] (node_17) ..controls (438.96bp,190.34bp) and (436.42bp,184.1bp)  .. (435.0bp,178.0bp) .. controls (432.98bp,169.34bp) and (432.98bp,166.66bp)  .. (435.0bp,158.0bp) .. controls (435.67bp,155.14bp) and (436.58bp,152.25bp)  .. (node_20);
  \draw (443.5bp,168.0bp) node {$1$};
  \draw [dashed,blue,->] (node_17) ..controls (454.0bp,182.68bp) and (454.0bp,165.22bp)  .. (node_20);
  \draw (462.5bp,168.0bp) node {$\overline{1}$};
  \draw [red,->] (node_4) ..controls (279.85bp,280.46bp) and (254.72bp,260.96bp)  .. (node_6);
  \draw (282.5bp,267.0bp) node {$2$};
  \draw [dashed,blue,->] (node_7) ..controls (152.73bp,280.53bp) and (128.75bp,261.14bp)  .. (node_15);
  \draw (155.5bp,267.0bp) node {$\overline{1}$};
  \draw [dashed,blue,->] (node_1) ..controls (388.51bp,379.78bp) and (408.44bp,360.83bp)  .. (node_9);
  \draw (421.5bp,366.0bp) node {$\overline{1}$};
  \draw [blue,->] (node_6) ..controls (208.05bp,182.55bp) and (212.24bp,164.88bp)  .. (node_14);
  \draw (222.5bp,168.0bp) node {$1$};
  \draw [dashed,blue,->] (node_6) ..controls (186.98bp,184.54bp) and (184.39bp,170.18bp)  .. (189.0bp,158.0bp) .. controls (190.27bp,154.65bp) and (191.98bp,151.39bp)  .. (node_14);
  \draw (197.5bp,168.0bp) node {$\overline{1}$};
  \draw [red,->] (node_14) ..controls (204.92bp,83.815bp) and (195.72bp,66.693bp)  .. (node_22);
  \draw (211.5bp,70.0bp) node {$2$};
  \draw [dashed,blue,->] (node_0) ..controls (398.67bp,577.27bp) and (428.3bp,557.44bp)  .. (node_8);
  \draw (440.5bp,564.0bp) node {$\overline{1}$};
  \draw [red,->] (node_13) ..controls (66.229bp,281.68bp) and (68.93bp,264.22bp)  .. (node_15);
  \draw (78.5bp,267.0bp) node {$2$};
  \draw [blue,->] (node_16) ..controls (718.96bp,388.34bp) and (716.42bp,382.1bp)  .. (715.0bp,376.0bp) .. controls (712.98bp,367.34bp) and (712.98bp,364.66bp)  .. (715.0bp,356.0bp) .. controls (715.67bp,353.14bp) and (716.58bp,350.25bp)  .. (node_19);
  \draw (723.5bp,366.0bp) node {$1$};
  \draw [dashed,blue,->] (node_16) ..controls (734.0bp,380.68bp) and (734.0bp,363.22bp)  .. (node_19);
  \draw (742.5bp,366.0bp) node {$\overline{1}$};
  \draw [blue,->] (node_4) ..controls (324.65bp,281.68bp) and (324.47bp,264.22bp)  .. (node_12);
  \draw (333.5bp,267.0bp) node {$1$};
  \draw [dashed,blue,->] (node_4) ..controls (308.25bp,289.17bp) and (305.54bp,283.05bp)  .. (304.0bp,277.0bp) .. controls (301.8bp,268.39bp) and (301.89bp,265.64bp)  .. (304.0bp,257.0bp) .. controls (304.69bp,254.19bp) and (305.62bp,251.35bp)  .. (node_12);
  \draw (312.5bp,267.0bp) node {$\overline{1}$};
  \draw [blue,->] (node_1) ..controls (342.55bp,380.55bp) and (337.63bp,362.88bp)  .. (node_4);
  \draw (349.5bp,366.0bp) node {$1$};
  \draw [blue,->] (node_3) ..controls (192.4bp,487.02bp) and (194.31bp,480.78bp)  .. (196.0bp,475.0bp) .. controls (198.66bp,465.9bp) and (201.43bp,455.95bp)  .. (node_11);
  \draw (210.5bp,465.0bp) node {$1$};
  \draw [dashed,blue,->] (node_3) ..controls (172.27bp,481.37bp) and (170.16bp,467.01bp)  .. (175.0bp,455.0bp) .. controls (176.4bp,451.53bp) and (178.28bp,448.19bp)  .. (node_11);
  \draw (183.5bp,465.0bp) node {$\overline{1}$};
  \draw [blue,->] (node_2) ..controls (348.09bp,479.68bp) and (349.17bp,462.22bp)  .. (node_1);
  \draw (358.5bp,465.0bp) node {$1$};
  \draw [dashed,blue,->] (node_2) ..controls (397.48bp,478.33bp) and (426.35bp,458.62bp)  .. (node_10);
  \draw (438.5bp,465.0bp) node {$\overline{1}$};
  \draw [red,->] (node_15) ..controls (84.749bp,182.55bp) and (89.304bp,164.88bp)  .. (node_23);
  \draw (99.5bp,168.0bp) node {$2$};
  \draw [dashed,blue,->] (node_23) ..controls (98.0bp,67.5bp) and (120.0bp,52.5bp) .. (node_22);
  \draw (117.0bp,67.5bp) node {$\overline{1}$};
\end{tikzpicture}
\]
\caption{The $\q(3)$-crystal $\SDT(\lambda)$ with $\lambda = -3\epsilon_1 - \epsilon_2$ created using \textsc{SageMath}~\cite{sage}.}
\label{fig:hw}
\end{figure}

Let us discuss how to extend Theorem~\ref{thm:cutting_it_out} to more general cases.
Consider the examples in Figure~\ref{fig:more_cutting}. For $\lambda = -\epsilon_1-\epsilon_2$, we note that the connected component we obtain after also setting $e_{\overline{1}} (L^{-\infty} \otimes r^{\vee}_{\lambda}) = \zero$ is isomorphic to $\SDT(\lambda)$. Therefore, a suitably modified tensor product rule should yield $\SDT(\lambda)$ when $\lambda$ may also contain zero entries. Furthermore, we would expect a modified tensor product rule to yield dual representations. For instance, if we consider $\lambda = -\epsilon_1$, note that after setting $e_{\overline{1}} (e_2 e_1 L^{-\infty} \otimes r^{\vee}_{\lambda}) = \zero$, we would obtain the dual version of $\SDT(-\epsilon_1 - \epsilon_2)$.

\begin{figure}
\[
\begin{tikzpicture}[>=latex,line join=bevel,every node/.style={scale=.5},xscale=0.5,yscale=.5]
\node (node_7) at (179.5bp,120.5bp) [draw,draw=none] {${\def\lr#1{\multicolumn{1}{|@{\hspace{.6ex}}c@{\hspace{.6ex}}|}{\raisebox{-.3ex}{$#1$}}}\raisebox{-.6ex}{$\begin{array}[b]{*{4}c}\cline{1-4}\lr{3}&\lr{3}&\lr{3}&\lr{3}\\\cline{1-4}&\lr{2}&\lr{2}&\lr{1}\\\cline{2-4}&&\lr{1}\\\cline{3-3}\end{array}$}} \otimes {r^{\vee}_{-\epsilon_{1} - \epsilon_2}}$};
  \node (node_6) at (132.5bp,21.5bp) [draw,draw=none] {${\def\lr#1{\multicolumn{1}{|@{\hspace{.6ex}}c@{\hspace{.6ex}}|}{\raisebox{-.3ex}{$#1$}}}\raisebox{-.6ex}{$\begin{array}[b]{*{3}c}\cline{1-3}\lr{3}&\lr{3}&\lr{3}\\\cline{1-3}&\lr{2}&\lr{2}\\\cline{2-3}&&\lr{1}\\\cline{3-3}\end{array}$}} \otimes {r^{\vee}_{-\epsilon_{1} - \epsilon_2}}$};
  \node (node_5) at (179.5bp,219.5bp) [draw,draw=none] {${\def\lr#1{\multicolumn{1}{|@{\hspace{.6ex}}c@{\hspace{.6ex}}|}{\raisebox{-.3ex}{$#1$}}}\raisebox{-.6ex}{$\begin{array}[b]{*{5}c}\cline{1-5}\lr{3}&\lr{3}&\lr{3}&\lr{3}&\lr{2}\\\cline{1-5}&\lr{2}&\lr{2}&\lr{1}\\\cline{2-4}&&\lr{1}\\\cline{3-3}\end{array}$}} \otimes {r^{\vee}_{-\epsilon_{1} - \epsilon_2}}$};
  \node (node_4) at (68.5bp,120.5bp) [draw,draw=none] {${\def\lr#1{\multicolumn{1}{|@{\hspace{.6ex}}c@{\hspace{.6ex}}|}{\raisebox{-.3ex}{$#1$}}}\raisebox{-.6ex}{$\begin{array}[b]{*{4}c}\cline{1-4}\lr{3}&\lr{3}&\lr{3}&\lr{2}\\\cline{1-4}&\lr{2}&\lr{2}\\\cline{2-3}&&\lr{1}\\\cline{3-3}\end{array}$}} \otimes {r^{\vee}_{-\epsilon_{1} - \epsilon_2}}$};
  \node (node_3) at (232.5bp,318.5bp) [draw,draw=none] {${\def\lr#1{\multicolumn{1}{|@{\hspace{.6ex}}c@{\hspace{.6ex}}|}{\raisebox{-.3ex}{$#1$}}}\raisebox{-.6ex}{$\begin{array}[b]{*{6}c}\cline{1-6}\lr{3}&\lr{3}&\lr{3}&\lr{3}&\lr{2}&\lr{2}\\\cline{1-6}&\lr{2}&\lr{2}&\lr{1}\\\cline{2-4}&&\lr{1}\\\cline{3-3}\end{array}$}} \otimes {r^{\vee}_{-\epsilon_{1} - \epsilon_2}}$};
  \node (node_2) at (105.5bp,318.5bp) [draw,draw=none] {${\def\lr#1{\multicolumn{1}{|@{\hspace{.6ex}}c@{\hspace{.6ex}}|}{\raisebox{-.3ex}{$#1$}}}\raisebox{-.6ex}{$\begin{array}[b]{*{5}c}\cline{1-5}\lr{3}&\lr{3}&\lr{3}&\lr{3}&\lr{1}\\\cline{1-5}&\lr{2}&\lr{2}&\lr{1}\\\cline{2-4}&&\lr{1}\\\cline{3-3}\end{array}$}} \otimes {r^{\vee}_{-\epsilon_{1} - \epsilon_2}}$};
  \node (node_1) at (46.5bp,219.5bp) [draw,draw=none] {${\def\lr#1{\multicolumn{1}{|@{\hspace{.6ex}}c@{\hspace{.6ex}}|}{\raisebox{-.3ex}{$#1$}}}\raisebox{-.6ex}{$\begin{array}[b]{*{4}c}\cline{1-4}\lr{3}&\lr{3}&\lr{3}&\lr{1}\\\cline{1-4}&\lr{2}&\lr{2}\\\cline{2-3}&&\lr{1}\\\cline{3-3}\end{array}$}} \otimes {r^{\vee}_{-\epsilon_{1} - \epsilon_2}}$};
  \node (node_0) at (179.5bp,417.5bp) [draw,draw=none] {${\def\lr#1{\multicolumn{1}{|@{\hspace{.6ex}}c@{\hspace{.6ex}}|}{\raisebox{-.3ex}{$#1$}}}\raisebox{-.6ex}{$\begin{array}[b]{*{6}c}\cline{1-6}\lr{3}&\lr{3}&\lr{3}&\lr{3}&\lr{2}&\lr{1}\\\cline{1-6}&\lr{2}&\lr{2}&\lr{1}\\\cline{2-4}&&\lr{1}\\\cline{3-3}\end{array}$}} \otimes {r^{\vee}_{-\epsilon_{1} - \epsilon_2}}$};
  \draw [blue,->] (node_1) ..controls (36.146bp,186.35bp) and (34.336bp,172.12bp)  .. (38.5bp,160.0bp) .. controls (39.574bp,156.87bp) and (41.017bp,153.81bp)  .. (node_4);
  \definecolor{strokecol}{rgb}{0.0,0.0,0.0};
  \pgfsetstrokecolor{strokecol}
  \draw (47.0bp,170.0bp) node {$1$};
  \draw [dashed,blue,->] (node_1) ..controls (52.742bp,191.99bp) and (54.188bp,185.76bp)  .. (55.5bp,180.0bp) .. controls (57.561bp,170.96bp) and (59.763bp,161.12bp)  .. (node_4);
  \draw (69.0bp,170.0bp) node {$\overline{1}$};
  \draw [dashed,blue,->] (node_0) ..controls (198.19bp,382.3bp) and (208.08bp,364.2bp)  .. (node_3);
  \draw (219.0bp,368.0bp) node {$\overline{1}$};
  \draw [red,->] (node_0) ..controls (153.21bp,382.04bp) and (139.08bp,363.51bp)  .. (node_2);
  \draw (157.0bp,368.0bp) node {$2$};
  \draw [red,->] (node_5) ..controls (179.5bp,184.68bp) and (179.5bp,167.22bp)  .. (node_7);
  \draw (188.0bp,170.0bp) node {$2$};
  \draw [red,->] (node_4) ..controls (91.152bp,85.168bp) and (103.23bp,66.858bp)  .. (node_6);
  \draw (115.0bp,71.0bp) node {$2$};
  \draw [dashed,blue,->] (node_7) ..controls (162.99bp,85.425bp) and (154.32bp,67.54bp)  .. (node_6);
  \draw (168.0bp,71.0bp) node {$\overline{1}$};
  \draw [red,->] (node_3) ..controls (213.81bp,283.3bp) and (203.92bp,265.2bp)  .. (node_5);
  \draw (219.0bp,269.0bp) node {$2$};
  \draw [dashed,blue,->] (node_2) ..controls (131.79bp,283.04bp) and (145.92bp,264.51bp)  .. (node_5);
  \draw (157.0bp,269.0bp) node {$\overline{1}$};
\end{tikzpicture}
\qquad\qquad
\begin{tikzpicture}[>=latex,line join=bevel,every node/.style={scale=.5},xscale=0.5,yscale=.5]
\node (node_5) at (159.5bp,120.5bp) [draw,draw=none] {${\def\lr#1{\multicolumn{1}{|@{\hspace{.6ex}}c@{\hspace{.6ex}}|}{\raisebox{-.3ex}{$#1$}}}\raisebox{-.6ex}{$\begin{array}[b]{*{4}c}\cline{1-4}\lr{3}&\lr{3}&\lr{3}&\lr{3}\\\cline{1-4}&\lr{2}&\lr{2}&\lr{1}\\\cline{2-4}&&\lr{1}\\\cline{3-3}\end{array}$}} \otimes {r^{\vee}_{-\epsilon_1}}$};
  \node (node_4) at (159.5bp,21.5bp) [draw,draw=none] {${\def\lr#1{\multicolumn{1}{|@{\hspace{.6ex}}c@{\hspace{.6ex}}|}{\raisebox{-.3ex}{$#1$}}}\raisebox{-.6ex}{$\begin{array}[b]{*{3}c}\cline{1-3}\lr{3}&\lr{3}&\lr{3}\\\cline{1-3}&\lr{2}&\lr{2}\\\cline{2-3}&&\lr{1}\\\cline{3-3}\end{array}$}} \otimes {r^{\vee}_{-\epsilon_1}}$};
  \node (node_3) at (159.5bp,219.5bp) [draw,draw=none] {${\def\lr#1{\multicolumn{1}{|@{\hspace{.6ex}}c@{\hspace{.6ex}}|}{\raisebox{-.3ex}{$#1$}}}\raisebox{-.6ex}{$\begin{array}[b]{*{5}c}\cline{1-5}\lr{3}&\lr{3}&\lr{3}&\lr{3}&\lr{2}\\\cline{1-5}&\lr{2}&\lr{2}&\lr{1}\\\cline{2-4}&&\lr{1}\\\cline{3-3}\end{array}$}} \otimes {r^{\vee}_{-\epsilon_1}}$};
  \node (node_2) at (39.5bp,120.5bp) [draw,draw=none] {${\def\lr#1{\multicolumn{1}{|@{\hspace{.6ex}}c@{\hspace{.6ex}}|}{\raisebox{-.3ex}{$#1$}}}\raisebox{-.6ex}{$\begin{array}[b]{*{4}c}\cline{1-4}\lr{3}&\lr{3}&\lr{3}&\lr{2}\\\cline{1-4}&\lr{2}&\lr{2}\\\cline{2-3}&&\lr{1}\\\cline{3-3}\end{array}$}} \otimes {r^{\vee}_{-\epsilon_1}}$};
  \node (node_1) at (115.5bp,318.5bp) [draw,draw=none] {${\def\lr#1{\multicolumn{1}{|@{\hspace{.6ex}}c@{\hspace{.6ex}}|}{\raisebox{-.3ex}{$#1$}}}\raisebox{-.6ex}{$\begin{array}[b]{*{5}c}\cline{1-5}\lr{3}&\lr{3}&\lr{3}&\lr{3}&\lr{1}\\\cline{1-5}&\lr{2}&\lr{2}&\lr{1}\\\cline{2-4}&&\lr{1}\\\cline{3-3}\end{array}$}} \otimes {r^{\vee}_{-\epsilon_1}}$};
  \node (node_0) at (57.5bp,219.5bp) [draw,draw=none] {${\def\lr#1{\multicolumn{1}{|@{\hspace{.6ex}}c@{\hspace{.6ex}}|}{\raisebox{-.3ex}{$#1$}}}\raisebox{-.6ex}{$\begin{array}[b]{*{4}c}\cline{1-4}\lr{3}&\lr{3}&\lr{3}&\lr{1}\\\cline{1-4}&\lr{2}&\lr{2}\\\cline{2-3}&&\lr{1}\\\cline{3-3}\end{array}$}} \otimes {r^{\vee}_{-\epsilon_1}}$};
  \draw [dashed,blue,->] (node_1) ..controls (130.96bp,283.43bp) and (139.07bp,265.54bp)  .. (node_3);
  \definecolor{strokecol}{rgb}{0.0,0.0,0.0};
  \pgfsetstrokecolor{strokecol}
  \draw (150.0bp,269.0bp) node {$\overline{1}$};
  \draw [blue,->] (node_0) ..controls (42.457bp,192.34bp) and (39.923bp,186.1bp)  .. (38.5bp,180.0bp) .. controls (36.426bp,171.11bp) and (35.931bp,161.26bp)  .. (node_2);
  \draw (47.0bp,170.0bp) node {$1$};
  \draw [dashed,blue,->] (node_0) ..controls (58.738bp,186.65bp) and (58.198bp,172.41bp)  .. (55.5bp,160.0bp) .. controls (54.915bp,157.31bp) and (54.15bp,154.57bp)  .. (node_2);
  \draw (67.0bp,170.0bp) node {$\overline{1}$};
  \draw [blue,->] (node_1) ..controls (95.049bp,283.3bp) and (84.227bp,265.2bp)  .. (node_0);
  \draw (101.0bp,269.0bp) node {$1$};
  \draw [blue,->] (node_5) ..controls (144.46bp,93.338bp) and (141.92bp,87.097bp)  .. (140.5bp,81.0bp) .. controls (138.48bp,72.344bp) and (138.48bp,69.656bp)  .. (140.5bp,61.0bp) .. controls (141.17bp,58.142bp) and (142.08bp,55.252bp)  .. (node_4);
  \draw (149.0bp,71.0bp) node {$1$};
  \draw [dashed,blue,->] (node_5) ..controls (159.5bp,85.683bp) and (159.5bp,68.22bp)  .. (node_4);
  \draw (168.0bp,71.0bp) node {$\overline{1}$};
  \draw [red,->] (node_3) ..controls (159.5bp,184.68bp) and (159.5bp,167.22bp)  .. (node_5);
  \draw (168.0bp,170.0bp) node {$2$};
\end{tikzpicture}
\]
\caption{The $\q(3)$-crystal connected component of $\SDT(-\infty) \otimes \R_{-\epsilon_1}^\vee$ (resp.\ $\SDT(-\infty) \otimes \R_{-\epsilon_1-\epsilon_2}^\vee$) generated by $L^{-\infty} \otimes r^{\vee}_{-\epsilon_1}$ (resp.\ $L^{-\infty} \otimes r^{\vee}_{-\epsilon_1-\epsilon_2}$).}
\label{fig:more_cutting}
\end{figure}

\subsection*{Acknowledgements}
T.S.\ would like to thank Central Michigan University for its hospitality during his visit in November, 2018 and January, 2020.
The authors thank the referees for helpful comments.
This work benefited from computations using \textsc{SageMath}~\cite{sage}.

\appendix
\section{\textsc{SageMath} code}

We construct Figure~\ref{fig:hw}.

\begin{lstlisting}
sage: B = crystals.infinity.Tableaux(['Q',3])
sage: e = B.weight_lattice_realization().basis()
sage: R = crystals.elementary.R(['Q',3], -3*e[0]-2*e[1], dual=True)
sage: x = tensor([B.module_generators[0], R.module_generators[0]])
sage: view(x.subcrystal(max_depth=10,index_set=[-1,1,2]))
\end{lstlisting}

In Figure~\ref{fig:hw}, we have done some mild post-processing on the resulting crystal graph to shift the basis of $\Lambda^-$ from being $(e_0, e_1, e_2)$ to $(\epsilon_0, \epsilon_1, \epsilon_2)$. The construction of Figure~\ref{fig:more_cutting} is similar.

\bibliographystyle{plain}
\bibliography{infinitycrystalqn}{}

\end{document}